\documentclass[11pt,notitlepage]{amsart}
\usepackage[b5paper, margin={0.53in,0.65in}]{geometry}
\usepackage{amsmath}
\usepackage{mathdots}
\usepackage{amssymb}
\usepackage{tikz-cd}
\usepackage{hyperref}
\usepackage{palatino}\usepackage{mathtools}
\usetikzlibrary{cd}
\newtheorem{theorem}{Theorem}[section]
\newtheorem{lemma}[theorem]{Lemma}
\newtheorem{proposition}[theorem]{Proposition}

\theoremstyle{definition}

\theoremstyle{remark}
\newtheorem{remark}[theorem]{Remark}

\newcommand{\calF}{\mathcal{F}}

\newcommand{\calO}{\mathcal{O}}

\newcommand{\calR}{\mathcal{R}}
\newcommand{\calS}{\mathcal{S}}

\newcommand{\calV}{\mathcal{V}}

\newcommand{\bbA}{\mathbb{A}}

\newcommand{\bbC}{\mathbb{C}}

\newcommand{\bbR}{\mathbb{R}}

\newcommand{\nHeis}{n_{\mathrm{H}}}

\newcommand{\nPlusKone}{n_{+,\,k+1}^{\,k-2}}

\DeclareMathOperator{\id}{{\bf 1}}

\DeclareMathOperator{\GL}{GL}

\DeclareMathOperator{\Sp}{Sp}

\DeclareMathOperator{\Hom}{Hom}

\DeclareMathOperator{\diag}{diag}
\DeclareMathOperator{\Mat}{Mat}

\DeclareMathOperator{\Ind}{Ind}

\begin{document}

\title{Fourier coefficients of the degenerate Eisenstein series on symplectic groups}

\author[A.X.Hou]{Alan Xuelun Hou}
\author[Y.Lei]{Yusheng Lei}
\address{Department of Mathematics,
Brandeis University,
Waltham, Massachusetts, USA}
\address{Shanghai, China}
\email{hou@brandeis.edu}
\email{leiusher@gmail.com}
\thanks{}

\subjclass[2020]{Primary 11F30; Secondary 11F70, 22E55}

\date{\today.}

\keywords{Automorphic representations, degenerate Eisenstein series, Fourier coefficients, automorphic descent, symplectic groups, unipotent orbits}
\begin{abstract}
We study degenerate Eisenstein series on symplectic groups and construct a
new automorphic descent. We show that a certain Fourier coefficient, after restriction to a
smaller symplectic group, is itself an Eisenstein series with the same inducing parameter
s. We describe the resulting holomorphic section explicitly in terms of the original section
and local L-factors at unramified places. At every regular point of the local descent, we prove surjectivity at non-Archimedean places and dense
image at Archimedean places.  We finally indicate an application of the iterated descent to maximal Fourier coefficients of Eisenstein
series.
\end{abstract}

\maketitle

\tableofcontents
\section{Introduction}
\subsection{Background} One of the most important ways to study an Eisenstein series is by analyzing its Fourier coefficients parameterized by certain unipotent orbits. In this paper, we give a detailed analysis of such coefficients in a new situation, that of a degenerate Eisenstein series on the symplectic group. 

Let $F$ be a number field and $\bbA$ its ring of adeles. For any positive integer $k$, $3\leqslant k <n$, we define $P_{k}:=P_{k,n} = M_{k}\ltimes U_{k}$ to be the standard maximal parabolic subgroup of the symplectic group $\Sp_{2n}$, with the Levi subgroup $M_k \cong \GL_k\times \Sp_{2(n-k)}$. Denote by $\chi$ a Hecke character. Then for this Levi subgroup, consider the representation $\chi|\det|^s\otimes \id_{\Sp_{2(n-k)}}$ on $\GL_k(\bbA)\times \Sp_{2(n-k)}(\bbA)$. Letting $U_k(\bbA)$ act trivially, we parabolically induce this representation to $\Sp_{2n}(\bbA)$, and denote the resulting representation by $I_n(s)$, whose precise definition can be found in (\ref{eq301}) in Section 2.3. 

Let $f_s$ be a holomorphic section in $I_n(s)$, and form the degenerate Eisenstein series of $\Sp_{2n}(\bbA)$ defined for $\Re(s)\gg0$, by
\begin{equation}\label{eis1}
  E_{f_s}(g) = \sum_{\gamma\in P_k(F)\setminus \Sp_{2n}(F)}f_s(\gamma g).
\end{equation}
Then $E_{f_s}$ is a meromorphic function of $s$ and may be continued to the whole plane. This Eisenstein series has a certain nonzero Fourier coefficient attached to the unipotent radical $U_2$ of the standard maximal parabolic subgroup $P_2$. For $P_1$, such a coefficient vanishes (for details, see \cite{GIN06}, Chapter 5). Let $\psi_\calO$ be the generic character attached to the unipotent radical $U_2(\bbA)$, whose precise definition can be found in \hyperref[sec204]{Section 2.4}. We define
\begin{equation}\label{fc}
  F_{\psi_\calO}(E_{f_s}(g)) := \int\limits_{U_{2}(F)\setminus U_{2}(\bbA)}E_{f_s}(ug)\psi_{\calO}(u)\,du.
\end{equation}

\subsection{Main Theorems} For $n\geq3$, the stabilizer group of $\psi_{\calO}$ under the $\GL_2(\bbA)\times \Sp_{2(n-2)}(\bbA)$-conjugation action is isomorphic to $\Sp_{2}(\bbA) \times \Sp_{2(n-3)}(\bbA)$. The Fourier coefficient $F_{\psi_\calO}(E_{f_s}(g))$ is an automorphic function on this stabilizer group. We show that by further restricting to $\Sp_{2(n-3)}(\bbA)$, the Fourier coefficient $F_{\psi_\calO}(E_{f_s}(g))$ is an Eisenstein series:
\begin{theorem}\label{thm101}
 For $\Re(s)$ sufficiently large, $F_{\psi_\calO}(E_{f_s}(g))$, viewed as a function on $\Sp_{2(n-3)}(\bbA)$, is an Eisenstein series associated to the parabolic induction 
  \begin{equation}\label{indrep2}
  I_{n-3}(s) = \Ind_{P_{k-2,2(n-3)}(\bbA)}^{\Sp_{2(n-3)}(\bbA)} \chi|\det|^{s}\otimes \id_{\Sp_{2(n-k-1)}}.
  \end{equation}
In addition, at a place $v$, the local
descent integrals form a meromorphic family, as in Proposition~\ref{prop4}. If $\Re(s_0)\geq 0$ and this operator family is holomorphic at $s=s_0$, then the resulting local map is surjective when $v$ is non-Archimedean and has dense image in the smooth Fréchet topology
when $v$ is Archimedean.
\end{theorem}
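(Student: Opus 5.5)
Unfolding the Eisenstein series and analysing double cosets reduces the global statement to an Eisenstein series on $\Sp_{2(n-3)}$ whose section is a local integral; the local part of the theorem is then a statement about that integral. In the range $\Re(s)\gg 0$, where (\ref{eis1}) converges absolutely, I will substitute (\ref{eis1}) into (\ref{fc}) and exchange sum and integral. This decomposes the coefficient according to the double cosets $P_k(F)\backslash \Sp_{2n}(F)/P_2(F)$. These are indexed by Weyl-type representatives $w$, recorded by the dimension of the intersection of the Lagrangian-type flags, i.e. by pairs recording how the $k$-dimensional isotropic subspace meets the $2$-dimensional one.

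For each representative $w$ I further decompose $P_2(F)\cap w^{-1}P_k(F)w\backslash P_2(F)$ via the Levi $\GL_2\times\Sp_{2(n-2)}$. The standard mechanism kills most terms: whenever the conjugate $w u w^{-1}$, for $u$ in a subgroup of $U_2$ on which $\psi_\calO$ is nontrivial, lands in $U_k$ (where $f_s$ is left invariant), the inner integral contains $\int_{F\backslash\bbA}\psi(x)\,dx=0$.

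I expect exactly one surviving orbit, namely the one where the $2$-dimensional isotropic space lies in the $k$-dimensional one. Its further $\GL_2\times\Sp_{2(n-2)}$-orbits on the relevant quotient are, by genericity of $\psi_\calO$ (the same vanishing argument as for $P_1$ in \cite{GIN06}), reduced to a single open orbit with stabilizer $\Sp_2\times\Sp_{2(n-3)}$. Restricting to $h\in\Sp_{2(n-3)}(\bbA)$, the sum over the remaining rational points becomes $\sum_{\gamma\in P_{k-2,2(n-3)}(F)\backslash\Sp_{2(n-3)}(F)}$. The summand is
\[
\Lambda(f_s)(h)=\int_{U'(\bbA)} f_s(\eta u h)\,\psi_\calO(u)\,du,
\]
where $\eta$ is the open-orbit representative and $U'$ is the part of $U_2$ not absorbed by $U_k$.

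Next I check the transformation law of $\Lambda(f_s)$. For $p=m\,u_1$ with $m=(a,g')\in \GL_{k-2}\times\Sp_{2(n-k-1)}$ embedded in $\Sp_{2(n-3)}$, conjugating through $\eta$ shows that $\eta p\eta^{-1}$ lies in $P_k$ with $\GL_k$-component of determinant $\det a$. The modulus characters must match, giving $\Lambda(f_s)(ph)=\chi(\det a)|\det a|^{s+\rho_{k-2,n-3}}\Lambda(f_s)(h)$, so $\Lambda(f_s)\in I_{n-3}(s)$. The shift of $s$ has to be tracked carefully; the statement asserts it is the same $s$, which I will confirm by comparing $\rho_{P_k}$ restricted to the embedded $\GL_{k-2}$ with $\rho_{P_{k-2}}$ together with the Jacobian of $u\mapsto$ conjugation. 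The integral is Eulerian for factorizable $f_s$, giving local operators $\Lambda_v$, which form the meromorphic family of Proposition~\ref{prop4}.

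For the local surjectivity I will follow the usual strategy: exhibit the image as containing sections supported on a small neighborhood of the big cell. Take $f_s$ supported in $P_k\eta\,U'\,N$ for small compact open $N$, with the profile in $u$ a Schwartz function $\phi$. Then $\Lambda_v(f_s)$ becomes $\widehat\phi(\psi_\calO)$ times an arbitrary function on $P_{k-2}\backslash P_{k-2}\,\overline{N}'$, and such sections generate $I_{n-3}(s)$ as a representation. Since $\Lambda_v$ is $\Sp_{2(n-3)}$-equivariant and, at a holomorphic point, nonzero on these, its image is a nonzero subrepresentation containing a generating set. In the non-Archimedean case that is all of $I_{n-3}(s)$. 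In the Archimedean case the same argument with Dixmier--Malliavin and smooth compactly supported data gives a dense $\Sp_{2(n-3)}$-stable subspace; closure then gives density in the Fréchet topology.

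The main obstacle is the step "contains a generating set". Small-support sections at the identity coset generate the whole induced representation only because $\Re(s_0)\ge0$ and $I_{n-3}(s_0)$ is generated by its restriction to the open cell for such $s_0$. If reducibility makes this fail, I would instead use the holomorphy hypothesis plus an explicit Iwasawa-decomposition computation to show every $K$-finite vector is hit.
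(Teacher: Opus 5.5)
\textbf{Gap: the global unfolding keeps the wrong double coset.} You expect the surviving class in $P_k(F)\backslash\Sp_{2n}(F)/P_2(F)$ to be the one where the $2$-dimensional isotropic subspace $M$ lies inside the $k$-dimensional one $L$. That is the class of the identity, and it contributes nothing. Indeed $U_2\subset U\subset P_k$, so $f_s$ is left $U_2(\bbA)$-invariant, and $P_2(F)$ normalizes $U_2$. Hence $f_s(\beta ug)=f_s(\beta g)$ for $\beta\in P_2(F)$, and the whole term equals $\bigl(\sum_{\beta} f_s(\beta g)\bigr)\int_{U_2(F)\backslash U_2(\bbA)}\psi_\calO(u)\,du=0$; correspondingly your $U'$ would be trivial there.

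The class that survives is the open one: $L\cap M=0$, $L$ pairs nondegenerately with $M$, and $\dim(L\cap M^\perp)=k-2$. In the paper's language this is the condition $w(1),w(2)\in\{-1,\dots,-k\}$ in \eqref{eq:survival}. For the representative \eqref{choiceofw} (acting on column vectors), $w^{-1}$ carries the subspace fixed by $P_k$ to the span of $e_4,\dots,e_{k+1},e_{2n-1},e_{2n}$.

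The second reduction is also not an ``open orbit by genericity'' statement. Inside the open class, the residual datum is the image $L'$ of $L\cap M^\perp$ in $M^\perp/M\cong F^{2(n-2)}$. The character $\psi_\calO$ singles out the hyperbolic plane $H$ spanned by $e_3$ and $e_{2n-2}$. The only contributing $M_{\psi_\calO}(F)$-orbit is $\{L'\subset H^\perp\}$, which is the \emph{closed} one. Its stabilizer is $\Sp_2\times P_{k-2,2(n-3)}$, and this is what produces the sum over $P_{k-2,2(n-3)}(F)\backslash\Sp_{2(n-3)}(F)$. The group $\Sp_2\times\Sp_{2(n-3)}$ you quote is the stabilizer of the character $\psi_\calO$ in $M_2$, not of an orbit of summands.

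Reaching this orbit needs two concrete vanishing arguments that an appeal to the $P_1$-vanishing of \cite{GIN06} does not supply. First, the condition \eqref{eq:survival}, which comes from $X_{w\tau_i}\subset P_k$ for $\tau_1=e_1-e_3$ and $\tau_2=e_2+e_3$, together with the equivalence of all surviving $w$ under $W(P_k)$ and $1\times W_{n-3}$. Second, the terms with $L'\not\perp H$, parametrized by $v\in F^{k-2}$ on the roots $e_3-e_j$ ($4\le j\le k+1$). These vanish only after you conjugate $\prod_{j=3}^{k+1}X_{e_2+e_j}(a_j)$, with $a_3=-\sum_i r_iv_i$ and $a_{i+3}=r_i$, across $v$. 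This leaves $f_s$ unchanged, because the $w$-conjugate lies in $P_k$, but it produces the factor $\prod_i\psi(-r_iv_i)$, and orthogonality forces $v=0$. Without these steps the identification with a single Eisenstein series on $\Sp_{2(n-3)}$ is not established.

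\textbf{The local part is a viable alternative route.} Constructing preimages directly differs from the paper's contragredient-annihilator argument, and it works. However, the obstacle you single out is not one. For every $s_0$, sections compactly supported in the open cell generate $I_{n-3,v}(s_0)$ under right translation: the compact flag variety is covered by finitely many translates of the cell, so a partition of unity suffices. Neither $\Re(s_0)\ge0$ nor reducibility enters.

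What you must check instead is the following.
\begin{enumerate}
\item The map $(u,\bar n)\mapsto P_k\eta u\iota_c(\bar n)$ is an isomorphism onto a locally closed subset, so that source sections with profile $\phi(u)\varphi(\bar n)$ exist.
\item At $s_0$, the continued operator agrees on these sections with the integral, which is compactly supported and entire in $s$. This is the identity-theorem comparison of $B_{\mathrm{raw}}$ and $B_{\mathrm{cont}}$ in the paper.
\end{enumerate}
Given these, continuity and density of the open cell show that the image contains every section compactly supported there. This yields surjectivity directly, even at Archimedean places.
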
 

This identification yields a descent tower from a degenerate Eisenstein
series on \(\Sp_{2n}(\bbA)\) to one on \(\Sp_{2n-6}(\bbA)\), with the
inducing parameter \(s\) unchanged. At unramified places, the descended
local section is given explicitly as follows.
\begin{theorem}
Let $f_s=\bigotimes_v f_{s,v}$ be a factorizable holomorphic section of $I_n(s)$. At every place
$v$ at which $\chi_v$, $\psi_v$, and $f_{s,v}$ are unramified, assume
that
\[
f_{s,v}=f^\circ_{n,v,s},
\]
the normalized spherical section of $I_{n,v}(s)$. Then
\[
\mathcal F_{v,s}\bigl(f^\circ_{n,v,s}\bigr)
=
c_v(s)f^\circ_{n-3,v,s},
\]
where
\[
c_v(s)=
L_v^{-1}(\chi_v^2,2s+k-1)
L_v^{-1}\left(\chi_v,s+n-\frac{k+1}{2}\right)
L_v^{-1}\left(\chi_v,s+n-\frac{k-1}{2}\right).
\]
\end{theorem}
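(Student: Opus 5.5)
We would compute the unramified local descent integral $\mathcal F_{v,s}(f^\circ_{n,v,s})$ directly. The first step is to observe that it must be proportional to the spherical vector. The local descent integral defining $\mathcal F_{v,s}$ (the local counterpart of the unfolded global coefficient, as set up in Proposition~\ref{prop4}) is an integral of $f_{s,v}$ against $\psi_{\calO,v}^{-1}$ over a unipotent domain, evaluated at $\gamma_0 h$ for a fixed Weyl-type representative $\gamma_0$ and $h\in\Sp_{2(n-3)}(F_v)$. Because $\psi_v$ has conductor $\calO_v$ and $\gamma_0$ can be chosen in $\Sp_{2n}(\calO_v)$, the function $h\mapsto \mathcal F_{v,s}(f^\circ_{n,v,s})(h)$ is right $\Sp_{2(n-3)}(\calO_v)$-invariant. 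It also lies in $I_{n-3,v}(s)$, since $\mathcal F_{v,s}$ is an intertwining map into that space. By uniqueness of the spherical vector, it therefore equals $c_v(s)f^\circ_{n-3,v,s}$ with $c_v(s)=\mathcal F_{v,s}(f^\circ_{n,v,s})(1)$. The remaining task is to evaluate this one integral at $h=1$.

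To evaluate $c_v(s)$, we factor the unipotent integration domain into three pieces, matching the three expected $L$-factors. Write the domain as $Y=Y_1Y_2Y_3$, where:
\begin{itemize}
\item $Y_1$ is a one-dimensional (or $\Sp_2$-type) root group coming from the $\GL_2$-block of the character $\psi_{\calO}$;
\item $Y_2$ and $Y_3$ are the two abelian pieces of the roots joining the $\Sp_2$ stabilizer factor to $\Sp_{2(n-3)}$.
\end{itemize}
On each piece we apply the Iwasawa decomposition of $f^\circ$. For a unipotent root element $x_\alpha(t)$ with $|t|>1$, we write $x_\alpha(t)=x_{-\alpha}(t^{-1})\,\alpha^\vee(t)\,k$, and the integral reduces to a geometric sum $\sum_m q^{-m(\cdot)}\chi_v(\varpi)^m$ twisted by the character.

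The twist by $\psi$ is what turns the usual Gindikin--Karpelevich factors $L(\cdot)/L(\cdot+1)$ into pure inverse $L$-factors. Integrals of the form $\int_{F_v}\psi(t)\,|t|^{-a}\chi(t)^{-1}\cdots$, restricted to $|t|>1$, vanish beyond the first shell. What survives is $1-\chi_v(\varpi)q^{-a}=L_v^{-1}(\chi_v,a)$. This is the standard mechanism behind Whittaker-type unramified computations, as in Casselman--Shalika for degenerate principal series.

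We then read off the exponents from the modulus character of $P_k$ restricted to the relevant tori:
\begin{itemize}
\item The $\GL_2$-piece gives a factor depending on $\chi^2$, with exponent $2s+k-1$ (the $\det$ of a rank-two torus in $\GL_k$, shifted by $\rho$).
\item The two symplectic pieces give the exponents $s+n-\frac{k+1}{2}$ and $s+n-\frac{k-1}{2}$, which differ by one.
\end{itemize}
We check these exponents using $\delta_{P_k}^{1/2}=|\det|^{n-\frac{k-1}{2}}$ and the normalization of $f^\circ_{n-3}$, which has $\delta_{P_{k-2}}^{1/2}=|\det|^{n-3-\frac{k-3}{2}}$.

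The main obstacle is arranging the order of integration so that each root-group substitution is legitimate, i.e.\ conjugating the Iwasawa torus part past the remaining unipotent variables without disturbing $\psi_{\calO}$. If a direct factorization fails, we would instead use the unramified computation for $\Sp_{2n}$ to $\Sp_{2n-2}$ type Fourier–Jacobi descents in stages, or compare with the constant term. Concretely, both sides transform under the unramified Satake parameter, so it suffices to compute on an explicit torus element and match Satake parameters. A generic-$s$ identity then suffices by meromorphy.
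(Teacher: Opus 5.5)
Your reduction to the single number $c_v(s)=\mathcal F_{v,s}(f^\circ_{n,v,s})(1)$ is fine: right invariance follows immediately from $\iota_c(k)\in K_v$, and the spherical line in $I_{n-3,v}(s)$ is one-dimensional. The paper also reduces to $g=1$. The gap is that this number is never actually computed.

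The domain $\overline U_{2,w}(F_v)$ has coordinates $x\in\Mat_{2(n-k)\times2}$, $y\in\Mat_{2\times(k-2)}$ and $z$. The character is $\psi^{w^{-1}}_{\calO,v}(u)=\psi_v(x_{1,1}+x_{2(n-k),2})$, and $f^\circ(u)$ does not factor over root subgroups. So ``apply Iwasawa on each of $Y_1,Y_2,Y_3$'' has no content until the integral is restructured, and your three pieces are neither specified nor of the right shape. The missing tool is root exchange:
\begin{itemize}
\item Conjugating by $\nPlusKone(r)$ with $r\in\calO_v^{k-2}$ (an element of $U_k\cap K_v$) shifts $x_{1,1}$ by $\sum_i y_{2,i}r_i$.
\item Hence the partial integral satisfies $G(y_1,y_2)=\psi_v(-\sum_i y_{2,i}r_i)\,G(y_1,y_2)$. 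It is therefore supported on $y_2\in\calO_v^{k-2}$, where it contributes volume one; the same argument handles $y_1$.
\item So the $y$-block contributes no $L$-factor at all.
\item A second exchange, with $\nHeis(r,m;t)$, later removes $x_{2,1},\dots,x_{2(n-k),1}$ and $c_2$ in the same way.
\end{itemize}

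Your mechanism for producing the three factors (each a $\psi$-twisted rank-one integral giving $1-\chi_v(\varpi)q^{-a}$) is also not what happens. Once the $y$-block is gone, $2(n-k)$ coordinates carry trivial character: the second column of $x$ except $x_{2(n-k),2}$, plus one $z$-coordinate. They form $N_{\mathcal R}$, the domain of $M(w_0,s)$ with $w_0=s_{k+1}\cdots s_n\cdots s_{k+1}s_k$. The Gindikin--Karpelevich product telescopes to
$\bigl(1-\chi_v(\varpi)q^{-(s+n-\frac{k-1}{2})}\bigr)/\bigl(1-\chi_v(\varpi)q^{-(s+\frac{k-1}{2}+k-n+1)}\bigr)$,
which is a ratio carrying a pole factor. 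What is left is a $\GL_3$ Jacquet integral, and Casselman--Shalika yields three factors, one of which cancels that denominator.

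Consequently, $L_v^{-1}(\chi_v,s+n-\frac{k-1}{2})$ is the telescoped numerator of an intertwining coefficient. Likewise, $L_v^{-1}(\chi_v^2,2s+k-1)$ is the non-simple-root factor of the $\GL_3$ formula, attached to a coordinate on which the character is trivial. Neither arises from a $\psi$-twisted rank-one integral.

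Reading exponents off modulus characters only checks consistency with the announced answer. Your fallback of matching Satake parameters on a torus element cannot determine $c_v(s)$ either: both sides are already multiples of the spherical vector, and the scalar is the whole content of the statement.
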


The descent tower also determines maximal partitions attached to Fourier
coefficients of certain residues.  Let
\(\chi\) be a nontrivial quadratic Hecke character, put
\[
 r=n-k\geq1,\qquad a=3k-2n\geq2,
\]
and let \(s_0\) run over the positive pole set
\(\Lambda_a\), defined in \eqref{eq:section5-Lambda}. As an application, The descent can be iterated in a suitable range and, together with
known results on Siegel residual Eisenstein series and composite
partitions, gives an application to maximal Fourier coefficients;
see Remark~\ref{rem:maximal-partition}.

We introduce the notations not yet defined in this section in Section 2.
In Section 3, we state our main descent theorem (\ref{thm401}). In
Section 4, we carry out the computations using root exchange,
the Casselman--Shalika formula, and the Gindikin--Karpelevich formula. In
Section 5, we complete the proof of the descent theorem and apply its
iteration to maximal Fourier coefficients of residues.

 More generally, the study of Fourier coefficients of the Eisenstein series falls in the automorphic descent construction proposed by \cite{GRS972}, \cite{GRS11} and \cite{GS222}. Previous work has been done in other cases by \cite{ik94},\cite{GRS03}, \cite{GIN06}, \cite{Cai18} and \cite{GS20}, or locally by \cite{MW87}.  Therefore, our construction gives a new automorphic descent for
degenerate Eisenstein series on $\Sp_{2n}$, beyond the cases treated
in the works cited above.
 
\subsection{Acknowledgments}
The authors thank David Ginzburg for suggesting this problem, and for helpful advice. We thank Solomon Friedberg and Omer Offen for useful discussions and suggestions. The first named author would also like to thank Yi Shan, Jun Su and Bingyu Zhang. The second named author's work on this project took place while he was a postdoctoral fellow at Tel Aviv University, and he extends his warm thanks to the department there for their hospitality.

\section{Preliminaries}
\subsection{The Groups}
Let $F$ be a number field. For any positive integer $n$, we consider $\Sp_{2n}$ as an algebraic group defined over $F$. Let $J_n$ be the $n \times n$ matrix with $1$ along the anti-diagonal and $0$ elsewhere. We realize the group as a matrix group as follows: for any commutative $F$-algebra $R$,
\begin{equation}\Sp_{2n}(R) = \left\{g \in \GL_{2n}(R)\;|\; ^tg\begin{pmatrix}
  &J_n\\
  -J_n&
\end{pmatrix} g = \begin{pmatrix}
  &J_n\\
  -J_n&
\end{pmatrix} \right\}.
\end{equation}

Define $B=TU$ to be the Borel subgroup of all the upper triangular matrices in $\Sp_{2n}$, where $T$ is all the diagonal matrices, and $U$ is the unipotent radical of $B$. We have a set of simple positive roots $\alpha_1,\alpha_2,\cdots, \alpha_n$, with $\alpha_n$ the unique long simple root. They are defined as follows. For any $t = \diag(t_1,t_2,\cdots,t_n,t_n^{-1},\cdots,t_2^{-1},t_1^{-1})\in T(R)$, we have
\begin{align*}
  \alpha_i(t) &= t_it_{i+1}^{-1}, i= 1,2,\cdots, n-1\\
  \alpha_n(t) &= t_n^2.
\end{align*}
Let $\triangle_n$ denote the set of simple positive roots. We also denote $\alpha_i$ by $e_i - e_{i+1}$ for $i = 1,2,\cdots, n-1$ and $\alpha_n$ by $2e_n$, so that we can describe the set of all positive roots as
\begin{equation}
  \Sigma^+ = \{e_i\pm e_j|1\leqslant i<j\leqslant n\}\cup \{2e_i | 1\leqslant i\leqslant n\}.
\end{equation} 

And by $\Sigma$ we denote the union of $\Sigma^+$ and the negative ones. Let $e_{i,j}$ be the $2n \times 2n$ matrix with $1$ on the $(i,j)$-th entry and zero elsewhere. For any positive root $\alpha$, we define the corresponding one-parameter root subgroup $r \to X_\alpha(r)$ of $\Sp_{2n}(R)$ by 
\begin{equation}\label{rootdef}
 X_\alpha(r) = \begin{cases}
  \exp(r(e_{i,j} - e_{2n-j+1, 2n-i+1})) &\alpha = e_i -e_j\\
  \exp(r(e_{i,2n-j+1} + e_{j,2n-i+1})) &\alpha = e_i + e_j\\
  \exp(re_{i,2n-i+1}) &\alpha = 2e_i.
 \end{cases}
\end{equation}
We use a similar notation for the negative roots. 

When $v$ is finite, let $K_v=\Sp_{2n}(\calO_v)$ with $\calO_v\subset F_v$ the ring of integers. When $v$ is infinite, take $K_v$ to be $U(n)$ when it is real and $USp(2n)$ when it is complex. Then $K = \prod_v K_v$ is a maximal compact subgroup of $\Sp_{2n}(\bbA)$. For any place $v$, $K_v$ is a fixed maximal compact subgroup of $\Sp_{2n}(F_v)$. Throughout the paper, we fix a nontrivial additive character $\psi: F\setminus \bbA \to \bbC^\times$ which factorizes into its local parts: $\psi(x) = \prod_v\psi_v(x_v)$. 

Let $X^*(\cdot)$ denote the lattice of characters defined over $F$, and let $P$ and $Q$ denote two parabolic subgroups of $G$. Let $\mathfrak{a}_P$ be the finite-dimensional $\bbR$-vector space $\mathfrak{a}_P = \Hom(X^*(P), \bbR) = \Hom(X^*(M), \bbR) = \Hom(X^*(Z_M), \bbR)$. We let $H_M : M(\bbA) \rightarrow \mathfrak{a}_P$ be the continuous group homomorphism given by $e^{\langle\chi,H_M(m)\rangle}=|\chi(m)|$ for all $m\in M(\bbA)$ and $\chi\in X^*(M)$. Denote the kernel of this map by $M(\bbA)^{1}$. This definition extends to parabolic subgroups trivially on $U(\bbA)$. For details, see \cite{MW95}, Chapter 1. We also let $W_n:=N_G(T)/C_G(T)$ be the Weyl group of $\Sp_{2n}$ with respect to $T$. The Weyl group $W_n$ acts on $\mathfrak a_0$. If $P=M_PU_P,\qquad Q=M_QU_Q$ are parabolic subgroups and $w\in W_n$ satisfies $wM_Pw^{-1}=M_Q$, then the action of $w$ restricts to a linear isomorphism $w:\mathfrak a_P\xrightarrow{\sim}\mathfrak a_Q$. We identify $W_n$ with the group of signed permutations of
$\{1,\ldots,n\}$. Thus
\[
w(i)=\varepsilon j,\qquad \varepsilon\in\{\pm1\},
\]
means that $w(e_i)=\varepsilon e_j$. Equivalently, $W_n$ acts on
$\{\pm1,\ldots,\pm n\}$ by permutations satisfying
$w(-i)=-w(i)$.

\subsection{The Embeddings} 
For $g=\begin{pmatrix}
    a & b\\
    c &d
\end{pmatrix}\in \Sp_2(R),h\in \Sp_{2(n-3)}(R)$, we define an embedding of $\Sp_{2}
\times \Sp_{2(n-3)}$ into $\Sp_{2n}$ by \begin{equation}\label{triangle}
    \triangle(g,h) := \begin{pmatrix}
      g&&&&\\
      &a&&b&\\
      &&h&&\\
      &c&&d&\\
      &&&&g^\ast\end{pmatrix}\in\Sp_{2n}(R),
\end{equation}
where $g^*$ is the element corresponding to $g$ such that $\triangle(g,h)$ is symplectic.

For $g\in \Sp_{2(n-3)}$, we define an embedding into $\Sp_{2n}$, via $\iota$ such that
\begin{equation}\label{embed}
  g = \begin{pmatrix}
    g_1&g_2&g_3\\
    g_4&g_5&g_6\\
    g_7&g_8&g_9
  \end{pmatrix} \mapsto \iota(g)=\begin{pmatrix}
    g_1&&g_2&&g_3\\
    &I_3&&0&\\
    g_4&&g_5&&g_6\\
    &0&&I_3&\\
    g_7&&g_8&&g_9
  \end{pmatrix},\quad \begin{aligned}
    &g_1,g_3,g_7,g_9\in \Mat_{(k-2)\times (k-2)}\\
    &g_2,g_8 \in \Mat_{(k-2)\times 2(n-k-1)}\\
    &g_4,g_6 \in \Mat_{2(n-k-1)\times (k-2)}\\
    &g_5 \in \Mat_{2(n-k-1)\times 2(n-k-1)}.
  \end{aligned}
\end{equation} 
Similarly, we define \[\iota_c(g)=\diag(I_3,g,I_3)\] as embedding of $\Sp_{2(n-3)}$ into $\Sp_{2n}$.
Fix an integer \(k\) with \(2\leq k<n\). We define
\(k\)-dependent embeddings
\[
\iota_k:\Sp_{2(n-3)}\longrightarrow\Sp_{2n},
\qquad
t_k:\GL_3\longrightarrow\Sp_{2n}.
\]
For \(k=2\), the embedding \(\iota_k\) coincides with
$\iota_c(g)=\diag(I_3,g,I_3)$.

For $h\in\GL_3$, we define an embedding $t$ of $\GL_3$ into $\Sp_{2n}$, via \begin{equation}\label{gl3em}
    t(h) := \begin{pmatrix}
    I_{k-2}&&&&\\
    &h&&&\\
    &&I_{2(n-k-1)}&&\\
    &&&h^\ast&\\
    &&&&I_{k-2}
  \end{pmatrix}.
\end{equation}

For $(h_1,h_2)\in \GL_k\times \Sp_{2(n-k)}$, we define $j(h_1,h_2)$ its embedding into the Levi block $M_k$ of $\Sp_{2n}$. We write $\iota=\iota_k$ and $t=t_k$ when no confusion can arise.

\subsection{Degenerate Eisenstein series}
Throughout this subsection and the remainder of the paper, fix
integers $n$ and $k$ satisfying $3\le k<n$, and fix a Hecke character $\chi:F^\times\backslash\mathbb A^\times\longrightarrow\mathbb C^\times$.

Let $I_n(s)$ be the representation of $\Sp_{2n}(\bbA)$ with right action, on the space of all smooth and right $K$-finite functions $f(g)$ on $\Sp_{2n}(\bbA)$ that satisfy
\begin{equation}\label{eq301}
  f(j(h_1,h_2)ug) = \chi(\det h_1)|\det h_1|^s \delta_{P_k}^{1/2}(j(h_1,h_2))f(g)
\end{equation}
for any $(h_1,h_2)\in \GL_k(\bbA)\times \Sp_{2(n-k)}(\bbA), u\in U_k(\bbA)$ and $g\in \Sp_{2n}(\bbA)$, with $\delta_{P_k}$ the modular character of $P_k$. Consider a smooth, holomorphic section $f_s$ of $I_n(s)$. Let $E_{f_s}(g)$ be the degenerate Eisenstein series as in (\ref{eis1}). The term ``degenerate" indicates that the inducing data are not cuspidal. The poles of the Eisenstein series coincide with the poles of its constant term along the Borel subgroup
  $C_0(g,f_s) = \int\limits_{U(F)\setminus U(\bbA)}E_{f_s}(ug)\,du$, which we can compute by using the standard unfolding technique (see, for example, \cite{Muic08}). 

Consider another Eisenstein series which is associated with an induction from the Borel subgroup $B(\bbA)$ of $\Sp_{2n}(\bbA)$. Let $\Lambda_s$ be the character of $T(F)\setminus T(\bbA)$ such that for $t=\diag(a_1,\cdots,a_k,a_{k+1},\cdots,a_n,a_n^{-1},\cdots,a_1^{-1})\in  T(\bbA)$, we have 
\begin{equation}
\Lambda_s(t)
= \chi\!\Big(\prod_{i=1}^k a_i\Big)\,
\Big(|a_1|^{\,s-\frac{k-1}{2}}\,
|a_2|^{\,s-\frac{k-3}{2}}\cdots
|a_k|^{\,s+\frac{k-1}{2}}\Big)\,
\Big(|a_{k+1}|^{\,k-n}\,
|a_{k+2}|^{\,k-n+1}\cdots
|a_n|^{-1}\Big).
\end{equation}
Let $I(\Lambda_s) = \Ind_{B(\bbA)}^{\Sp_{2n}(\bbA)}\left(\Lambda_s\right)$ be the induced representation. Note that $I(\Lambda_s)$ contains $I_n(s)$ as a subrepresentation. 

Let $w\in W_n$ be a Weyl group element. The action of $w$ on $T(\bbA)$ induces an action on characters. We denote by $\Lambda_s^w$ the action on $\Lambda_s$. Denote $W(P,Q)$ to be the elements $
\left\{
w\in W_n:
wM_Pw^{-1}=M_Q,\ 
w \text{ is the minimal-length representative of }
W(M_Q)wW(M_P)
\right\}$. In general, by \cite{BL24}, Chapter 2, for $w\in W(P,Q)$, the parabolic subgroups $P,Q\subset\Sp_{2n}(\bbA)$ reduced Weyl element, there is a global intertwining operator $M(w,s): I(\Lambda_s) \rightarrow I(\Lambda_s^w)$ given by the meromorphic continuation of
\begin{equation}
  M(w,s) f_s(g) = \int\limits_{(U(\bbA)\cap wU(\bbA)w^{-1})\setminus U(\bbA) }f_s(w^{-1}ug)\, du, 
\end{equation}
whenever the integral converges. This definition does not depend on the choice of the representative of $w$. If we assume $f = \otimes_vf_v$ is factorizable, the intertwining operator factors into a product of local intertwining operators 
  $M(w,s) f = \otimes_vM_v(w,s)f_v$. 
  
  \subsection{Unipotent orbits and Fourier coefficients}\label{sec204}
Unipotent orbits of the algebraic group $\Sp_{2n}$ are parameterized by partitions of $2n$ with the restriction that each odd number occurs with even multiplicity. For an orbit $\calO$ corresponding to the partition $(p_1^{r_1}p_2^{r_2}\cdots p_z^{r_z})$ where $p_i> p_{i+1}$ and $r_i>0$ for all $i$, we write $\calO := (p_1^{r_1}p_2^{r_2}\cdots p_z^{r_z})$.

Let $\calO= (p_1^{r_1}p_2^{r_2}\cdots p_z^{r_z})$. For each $p_i$, we associate $r_i$ copies of the torus element 
\[
h_{p_i}(t) = \diag(t^{p_i-1}, t^{p_i-3}, \cdots, t^{3-p_i}, t^{1-p_i} ).
\]
We obtain a one-parameter torus element $h_{\calO}(t)$ with non-increasing powers of $t$ along the diagonal after combining and rearranging the diagonal entries of all the $h_{p_i}(t)$'s. 
For details and background, see \cite{GIN06}.
 
Let \(u_0\) be a representative of the open
\(M(\calO)\)-orbit in \(L_{2,\calO}\), and let
$M^{u_0}(\calO)
=
\operatorname{Stab}_{M(\calO)}(u_0)$. Define $V_{i,\mathcal{O}},M(\mathcal{O})$ by
\[
V_{i,\calO}(F):= \langle X_{\alpha}(r)\in U : h_\calO(t)X_\alpha(r)h_\calO(t)^{-1} =X_\alpha(t^jr) \, \text{for some}\, j\geqslant i,\text{ }\alpha>0, r\in F\rangle,
\]
and
$M(\calO)(F) := T\cdot \langle X_{\alpha}(r):  h_\calO(t)X_\alpha(r)h_\calO(t)^{-1} =X_\alpha(r),\text{ where }\alpha\in\Sigma, r\in F\rangle$.

Let $V_{2,\calO}^{(1)}$ be the commutator subgroup of $ V_{2,\calO}$. 
Let $L_{2,\calO} =V_{2,\calO}/V_{2,\calO}^{(1)}$ be the maximal abelian quotient of $V_{2,\calO}$. 
We call $\psi_{\calO}: L_{2,\calO}(F)\setminus L_{2,\calO}(\bbA) \rightarrow \bbC^\times$ a generic (automorphic) character if the connected component of its stabilizer in $M(\calO)(F)$ has the same group type as $M^{u_0}(\calO)(\bar{F})$, the stabilizer of a representative of the unipotent orbit defined by $\mathcal{O}$.

Let $(\pi,\calV)$ be an automorphic representation of $\Sp_{2n}(\bbA)$ and $\psi_\calO:V_{2,\calO}(F)\setminus V_{2,\calO}(\bbA) \rightarrow \bbC^\times $ be a generic character associated with a unipotent orbit $\calO$ in $\Sp_{2n}$. We define the Fourier coefficient of any automorphic function $\varphi \in \calV$ associated with $\calO$ by
	\begin{equation}
F_{\psi_\calO}(\varphi)(g) = \int\limits_{V_{2,\calO}(F)\setminus V_{2,\calO}(\bbA)}\varphi(ug)\psi_\calO(u) \,du.
	\end{equation}

We say that the orbit $\calO$ supports $\pi$ if there exist some $\varphi\in \calV$ and $\psi_\calO$ generic such that the above integral is nonzero. Otherwise, we say that $\calO$ does not support $\pi$.

\section{Fourier coefficients of the degenerate Eisenstein series}
For the remainder of the paper, assume $3 \leqslant k < n$.
The Eisenstein series $E_{f_s}(g)$ is supported on the unipotent orbit $\calO  = (3^21^{2n-6})$ by \cite{GIN06}, Chapter 5. For this particular unipotent orbit, the Fourier coefficient would vanish on $P_1$, but not the parabolic subgroup $P_2 $ and $V_{2,\calO} = U_2$. Let $\psi_\calO: U_2(F)\setminus U_2(\bbA)\rightarrow \bbC^\times$ be 
  $\psi_\calO(u) = \psi(u_{1,3} + u_{2,{2n-2}}), u\in U_2(\bbA)$. Then we will study $F_{\psi_\calO}(E_{f_s}(g))$ defined in (\ref{fc}). The stabilizer $M_{\psi_\calO}(\bbA)$ of $\psi_\calO$ in $M_{2}(\bbA)$ can be identified with $\Sp_{2}(\bbA)\times \Sp_{2(n-3)}(\bbA)$ via $\triangle$ defined in (\ref{triangle}).

Let $Q_{\psi_\calO} = M_{\psi_\calO}\ltimes U_2$. By unfolding the Eisenstein series, we rewrite the Fourier coefficient as
\begin{equation}\label{int400}
    \begin{split}
      F_{\psi_\calO}(E_{f_s}(g)) &= \int\limits_{U_2(F)\setminus U_2(\bbA)} \sum_{\gamma\in P_k(F)\setminus \Sp_{2n}(F)}f_s(\gamma ug)\psi_\calO(u)\, du\\
      &= \sum_{\gamma \in P_k(F)\setminus \Sp_{2n}(F)/Q_{\psi_\calO}(F)} \int\limits_{U_2(F)\setminus U_2(\bbA)}  \sum_{\beta \in Q_{\psi_\calO}^\gamma(F)\setminus Q_{\psi_\calO}(F)} f_s(\gamma\beta u g )\psi_\calO(u)\, du,
    \end{split}
\end{equation}
with $Q_{\psi_\calO}^\gamma(F) =\left\{\beta\in Q_{\psi_\calO}(F)\mid \gamma\beta\gamma^{-1} \in P_k(F)\right\}$. By the Bruhat decomposition, we may find representatives for the double cosets in $P_{k}\backslash \Sp_{2n}/Q_{\psi_\calO}$ of the form $\gamma =wv_w$ where $w\in W_n$, $v_w\in V_w(F)$ and $V_w\subset U\cap M_2$ since $Q_{\psi_{\calO}}=M_{\psi_\calO}\ltimes U_2 $ for a subgroup $M_{\psi_\calO}$ of $M_2$. Here $V_w$ depends on $w$, and it contains a family of unipotent root subgroups such that it is not killed by the conjugation of $w$. 
We now show that most of these summands contribute zero. We first prove several lemmas.

\begin{lemma}
    We first parameterize $P_k(F) \backslash \Sp_{2n}(F) / Q_{\psi_\calO}(F)$ by representatives of the form $w v_w$ for $w \in W_n$ and $v_w$ in a unipotent subgroup $V_w$ depending on $w$. 
\end{lemma}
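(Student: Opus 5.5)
We will obtain the parametrization in two stages: first decompose $P_k\backslash \Sp_{2n}/P_2$ by the Bruhat decomposition, then refine each double coset from $P_2$ down to $Q_{\psi_\calO}=M_{\psi_\calO}\ltimes U_2$.

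\textbf{Step 1: the coarse decomposition.} The double cosets $P_k(F)\backslash \Sp_{2n}(F)/P_2(F)$ are indexed by $W(M_k)\backslash W_n/W(M_2)$. We fix $w$ to be the minimal-length representative in each such class. Concretely, $w$ is a signed permutation that is increasing on the blocks $\{1,\dots,k\}$ and $\{k+1,\dots,n\}$ on the left, and on $\{1,2\}$ and $\{3,\dots,n\}$ on the right. This is a finite list, which we will describe explicitly by recording where $w^{-1}$ sends $1$ and $2$ (possibly with a sign) relative to the $k$-block.

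\textbf{Step 2: splitting off the unipotent radical.} For a fixed $w$ we write
\[
P_k w P_2 = P_k w M_2 U_2 .
\]
Since $U_2\subset Q_{\psi_\calO}$, we have
\[
P_k(F)\backslash P_k(F)wP_2(F)/Q_{\psi_\calO}(F)\;\cong\;\big(w^{-1}P_kw\cap M_2\big)(F)\backslash M_2(F)/M_{\psi_\calO}(F).
\]
The group $w^{-1}P_kw\cap M_2$ is a parabolic subgroup $R_w$ of $M_2\cong \GL_2\times\Sp_{2(n-2)}$, because $w$ is minimal. We then apply the Bruhat decomposition inside $M_2$:
\[
M_2 = \bigsqcup_{w'} R_w\, w'\,(B\cap M_2).
\]
Next we write $B\cap M_2 = T\cdot(U\cap M_2)$. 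The torus $T$ is absorbed into $R_w$ up to normalizing $w'$. The factor $w'$ is combined with $w$ into a single Weyl element, still denoted $w$.

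\textbf{Step 3: cutting down the unipotent part.} The remaining ambiguity is in $U\cap M_2$ modulo $M_{\psi_\calO}$. The unipotent part of $M_{\psi_\calO}=\triangle(\Sp_2\times\Sp_{2(n-3)})$ contains the root subgroups of $\Sp_{2(n-3)}$ in the $\iota_c$-position and the $\Sp_2$ root diagonally embedded. We define $V_w$ as the product of the root subgroups $X_\alpha\subset U\cap M_2$ that satisfy both conditions:
\begin{itemize}
\item $w\alpha w^{-1}\notin P_k$, that is, $X_\alpha$ is not killed by the conjugation of $w$;
\item $X_\alpha$ is not absorbed into $M_{\psi_\calO}$ on the right.
\end{itemize}
Every element of $P_kwM_2U_2$ can then be written as $p\,w\,v\,q$ with $v\in V_w(F)$ and $q\in Q_{\psi_\calO}(F)$. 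For the ordering we use the standard fact that a product of root subgroups can be factored in any fixed order.

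\textbf{Main obstacle.} The hard part is that $M_{\psi_\calO}$ is reductive and not generated by root subgroups in a way compatible with the Bruhat cells. Its $\Sp_2$-factor sits diagonally in $\GL_2\times\Sp_{2(n-2)}$, so $R_w\backslash M_2/M_{\psi_\calO}$ need not have finitely many orbits. This is exactly why the parameter $v_w$ is needed in addition to $w$. I would handle this by acting first with the $\Sp_{2(n-3)}$-factor, via a Bruhat decomposition inside $\Sp_{2(n-2)}$ relative to the embedded $\Sp_{2(n-3)}$. The leftover $\GL_2$ and $\Sp_2$ diagonal part then leaves a small explicit unipotent group, namely products of the roots $e_1-e_2$ and $e_i\pm e_j$ with $i\le 3$. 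That group is $V_w$.

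The statement only asserts that every double coset has a representative of the form $wv_w$; it does not assert uniqueness. So we only need surjectivity of $(w,v)\mapsto P_kwvQ_{\psi_\calO}$, and redundancy among the representatives is harmless for the subsequent vanishing arguments.
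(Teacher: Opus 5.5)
Your proof is correct and is essentially the paper's argument. After you merge $w'$ into $w$, the detour through $P_k\backslash\Sp_{2n}/P_2$ and the Bruhat decomposition of $M_2$ reproduce the paper's direct decomposition $P_k(F)\,w\,U(F)=P_k(F)\,w\,(U\cap M_2)(F)\,U_2(F)$. Your Step~3 then orders the root subgroups of $U\cap M_2$ so that those with $wX_\alpha w^{-1}\subset P_k$ stand on the left and those of $\Sp_{2(n-3)}\subset M_{\psi_\calO}$ stand on the right; this is exactly the paper's construction of $\calR_w$ and $V_w$, and your appeal to the arbitrary-order factorization of $U\cap M_2$ supplies the justification the paper leaves implicit.

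The ``main obstacle'' paragraph is unnecessary for this existence statement, since nothing uses finiteness of $R_w\backslash M_2/M_{\psi_\calO}$ or a further Bruhat decomposition relative to $\Sp_{2(n-3)}$. Its list of roots should also read $e_1-e_2$, $2e_3$, $e_3\pm e_j$ ($j\ge 4$), cut down by the condition $wX_\alpha w^{-1}\not\subset P_k$.
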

\begin{proof}
    Since $U_2 \subset Q_{\psi_\calO}$, we have a surjection
\[
    P_k(F) \backslash \Sp_{2n}(F) / U_2(F) \twoheadrightarrow P_k(F) \backslash \Sp_{2n}(F) / Q_{\psi_\calO}(F).
\] By the Bruhat decomposition $\Sp_{2n}(F) = \bigsqcup_{w \in W_n} B(F)\, w\, B(F)$ together with $B \subset P_k$, every $g \in \Sp_{2n}(F)$ admits the form $g = b_1 w b_2$ with $b_1, b_2 \in B(F)$. Absorbing $b_1$ into $P_k$ on the left and writing $b_2 = t u$ with $t \in T$, $u \in U(F)$, the relation $w t = (w t w^{-1}) w$ together with $w t w^{-1} \in T \subset P_k$ allows us to absorb $t$ as well. Hence
\[
    P_k(F) \backslash \Sp_{2n}(F) / U(F) = \bigsqcup_{w \in W(P_k) \backslash W_n} P_k(F)\, w\, U(F) / U(F).
\]

To pass from $U$ to $U_2$, we retain the part of $U$ not in $U_2$. The decomposition $U = (U \cap M_2) \cdot U_2$ gives
    $U / U_2 \cong U \cap M_2 = \prod_{\alpha \in \Sigma^+(M_2)} X_\alpha$. For $\alpha \in \Sigma^+(M_2)$, conjugation gives $w X_\alpha(r) w^{-1} = X_{w \alpha}(\pm r)$. If $w \alpha \in \Sigma(P_k)$, then $X_{w \alpha} \subset P_k$, and $X_\alpha(r)$ is killed (absorbed into $P_k$ on the left after conjugation by $w$); otherwise it must be retained. Finally, since $Q_{\psi_\calO} = M_{\psi_\calO} \ltimes U_2$, the further right quotient by $Q_{\psi_\calO}$ kills those root subgroups $X_\alpha$ contained in $M_{\psi_\calO}$ as subgroups of $\Sp_{2n}$. The surviving root subgroups are therefore those indexed by
\[
    \calR_w := \{\alpha \in \Sigma^+(M_2) : X_\alpha \not\subset M_{\psi_\calO} \text{ and } w \alpha \notin \Sigma(P_k)\},
\]
and we set $V_w := \prod_{\alpha \in \calR_w} X_\alpha$. Every double coset in $P_k(F) \backslash \Sp_{2n}(F) / Q_{\psi_\calO}(F)$ admits a representative of the form $\gamma = w v_w$ with $w \in W_n$ and $v_w \in V_w(F)$.
\end{proof}

We now derive a necessary condition on $w$ for the corresponding summand to contribute nontrivially. Set
$\tau_1 := e_1 - e_3, \text{ and } \tau_2 := e_2 + e_3.$
These are the two positive roots on which $\psi_\calO$ is nontrivial when restricted to $X_{\tau_i}(r)$. We now determine which representatives $wv$ can contribute a
nonzero summand to the double-coset expansion in \eqref{int400}.
\begin{lemma}
    Among all $w$, only if $w(1),w(2)\in\{-1,\dots,-k\}$ and $w(3)\in \pm\{k+1,\dots,n\}$, the summand might contribute nonzero. 
\end{lemma}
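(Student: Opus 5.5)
Write $\gamma=wv$ with $v\in V_w(F)$. The plan is the standard vanishing argument: for each $w$ violating the stated conditions, find a one-parameter subgroup of $U_2$ on which $\psi_\calO$ is nontrivial but which $\gamma$ conjugates into $U_k$ (or into a subgroup of $P_k$ on which $f_s$ transforms trivially).

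For such a root subgroup $X_\alpha\subset U_2$, consider the inner integral in \eqref{int400}. Since $\gamma X_\alpha(r)\gamma^{-1}\in P_k(\bbA)$ and the inducing character is trivial on it, we have $f_s(\gamma X_\alpha(r) u g)=f_s(\gamma ug)$. The integral over $X_\alpha(F)\backslash X_\alpha(\bbA)$ therefore factors as
\[
\int_{F\backslash\bbA}\psi_\calO(X_\alpha(r))\,dr=0 .
\]
The sum over $\beta$ is handled by the usual collapse: combine the $\beta$-sum with the $U_2(F)\backslash U_2(\bbA)$ integral into an integral over $U_2^{\gamma}(F)\backslash U_2(\bbA)$, then apply the vanishing to each coset. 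Because $\psi_\calO$ is nontrivial only on $X_{\tau_1}$ and $X_{\tau_2}$, it suffices to show that $\gamma X_{\tau_i}\gamma^{-1}\subset U_k$ for some $i$. When $v$ is not trivial, $v\in U\cap M_2$ normalizes $U_2$ and moves $\psi_\calO$ to a conjugate character. One then uses instead a root $\alpha$ of $U_2$ with $w\alpha$ in $U_k$ and $\psi_\calO^{v}|_{X_\alpha}\ne1$.

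The case analysis on the signed permutation $w$ goes as follows. A root $e_i\pm e_j$ is mapped into $U_k$ (positive, involving an index $\le k$ in the right way) depending on where $w(i)$ and $w(j)$ land among the index sets $\{\pm1,\dots,\pm k\}$ and $\pm\{k+1,\dots,n\}$.
\begin{itemize}
\item Suppose $w(1)\notin\{-1,\dots,-k\}$. If $w(1)\in\{1,\dots,k\}$, then the root $\tau_1=e_1-e_3$ maps to $e_{w(1)}-w(e_3)$. This lies in $U_k$ unless $w(3)$ is also in $\{1,\dots,k\}$ with a restricted ordering, or is negative of small index. In those residual subcases one uses $\tau_2$, or the roots $e_1\pm e_j$ for $j\geq 4$ in $U_2$; these carry the trivial character, so instead one argues with the conjugated character after $v$, or pairs with the Levi $\GL_k$ part. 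If $w(1)\in\pm\{k+1,\dots,n\}$, one uses $\tau_1$ combined with $w(3)$.
\item The condition on $w(2)$ is treated symmetrically with $\tau_2=e_2+e_3$.
\item The condition on $w(3)$ follows by comparing $\tau_1$ and $\tau_2$: if $w(3)\in\pm\{1,\dots,k\}$, then given $w(1),w(2)$ negative of small index, one of $e_1-e_3$ or $e_2+e_3$ is sent into the $\GL_k$-Levi as a root on which the character $\chi|\det|^s$ is trivial (the inducing data on $\GL_k$ is one-dimensional). So $f_s$ is left-invariant under it, and the same integral vanishes.
\end{itemize}
This last point is why roots landing in the $\GL_k$ Levi also kill the summand: $f_s$ is invariant under all unipotents of $M_k$.

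The main obstacle is the bookkeeping with the unipotent $v\in V_w$, since conjugation by $v$ mixes $X_{\tau_1},X_{\tau_2}$ with other roots of $U_2$. I would first try to choose the representative $v$ to be trivial on the relevant root (using the freedom of $V_w$ modulo $M_{\psi_\calO}$). Failing that, I would argue with the larger group $\gamma^{-1}P_k\gamma\cap U_2$, which contains all the conjugated root subgroups, and show that $\psi_\calO$ is nontrivial on it whenever the conditions fail.
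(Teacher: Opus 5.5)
Your strategy is the paper's: kill the summand with a one-parameter subgroup of $U_2$ on which $\psi_\calO$ is nontrivial and which is carried into $P_k$, where $f_s$ is left-invariant under unipotents (Levi unipotents included, not just $U_k$). However, the two steps that make up the proof are left open or contain errors.

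\textbf{The element $v$.} You flag this as the main obstacle, and your criterion there is not the right one. For $v\neq 1$, $\gamma X_{\tau_i}\gamma^{-1}=w(vX_{\tau_i}v^{-1})w^{-1}$ is not a root subgroup; it picks up components along $w(e_1\pm e_j)$, $w(e_1+e_3)$, and so on. So ``$\gamma X_{\tau_i}\gamma^{-1}\subset U_k$'' depends on $v$ and is not a condition on $w\tau_i$, while your case analysis is phrased purely in terms of $w$. Neither of your fallbacks is carried out.

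The missing idea is to integrate along $Y_i(r):=v^{-1}X_{\tau_i}(r)v\subset U_2$ instead of along $X_{\tau_i}$. Then $\gamma Y_i(r)\gamma^{-1}=wX_{\tau_i}(r)w^{-1}=X_{w\tau_i}(\pm r)$, so $v$ drops out entirely. At the same time $\psi_\calO(Y_i(r))=\psi(r)$. By the commutator formula, conjugation by $v\in U\cap M_2$ only adds to $X_{\tau_i}(r)$ components at roots of the form $\tau_i+(\text{positive roots of }M_2)$. These are $e_1\pm e_j$ ($j\ge 4$), $e_1+e_3$, and the commutator roots $e_1+e_2,2e_1,2e_2$, and $\psi_\calO$ is trivial on all of them. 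This is your ``conjugated character'' idea with $\alpha=\tau_i$, once that identity is checked.

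Likewise, the $\beta$-sum cannot be absorbed into a $U_2$-integral, since $Q^\gamma_{\psi_\calO}\backslash Q_{\psi_\calO}$ has a Levi part. Instead, move each $\beta$ to the right via $u\mapsto\beta^{-1}u\beta$, which is allowed because $\beta$ normalizes $U_2$ and fixes $\psi_\calO$. Then the summands vanish term by term.

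\textbf{The case analysis.} Your first bullet is partly wrong: it has no residual subcases. If $w(1)=a\in\{1,\dots,k\}$, then $w\tau_1$ is:
\begin{itemize}
\item $e_a\mp e_c\in\Sigma(U_k)$ when $w(3)=\pm c$ with $c>k$;
\item $e_a+e_c\in\Sigma(U_k)$ when $w(3)=-c$ with $c\le k$;
\item $e_a-e_c\in\Sigma(M_k)$ when $w(3)=c\le k$.
\end{itemize}
So $\tau_1$ always works. Your fallback to the roots $e_1\pm e_j$ could not help anyway, since $\psi_\calO$ is trivial on them. Conversely, if $w(1)=\pm a$ with $a>k$ and $w(3)=c\in\{1,\dots,k\}$, then $w\tau_1=\pm e_a-e_c\notin\Sigma(P_k)$, so ``use $\tau_1$'' fails. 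There you must pass to $\tau_2$, for which $w\tau_2=w(e_2)+e_c\in\Sigma(P_k)$.

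A clean way to settle all cases at once: let $\ell(\beta)$ be the sum of the coefficients of $e_1,\dots,e_k$ in $\beta$. Then $X_\beta\subset P_k$ if and only if $\ell(\beta)\ge 0$. Moreover $\ell(we_j)\in\{-1,0,1\}$, with $\ell(we_j)=-1$ if and only if $w(j)\in\{-1,\dots,-k\}$, and $\ell(we_j)=0$ if and only if $w(j)\in\pm\{k+1,\dots,n\}$. Nonvanishing requires $\ell(we_1)-\ell(we_3)<0$ and $\ell(we_2)+\ell(we_3)<0$. If $\ell(we_3)=\pm1$, one of these would force $\ell(we_2)<-1$ or $\ell(we_1)<-1$, which is impossible. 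Hence $\ell(we_3)=0$ and $\ell(we_1)=\ell(we_2)=-1$, which is exactly the statement.
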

\begin{proof}
Fix a summand corresponding to $\gamma = w v_w$. Since $\beta \in Q_{\psi_\calO}(F) \subset P_2(F)$ normalizes $U_2(F)$ and stabilizes $\psi_\calO$, the change of variables $u \mapsto \beta^{-1} u \beta$ on $U_2(F) \backslash U_2(\bbA)$ preserves both the Haar measure and the character, giving
\begin{equation}\label{eq:beta-to-right}
    \int_{U_2(F) \backslash U_2(\bbA)} f_s(\gamma \beta u g)\, \psi_\calO(u)\, du = \int_{U_2(F) \backslash U_2(\bbA)} f_s(\gamma u \beta g)\, \psi_\calO(u)\, du.
\end{equation}

For $i = 1, 2$, define the one-parameter subgroup
   $ Y_{i, v}(r) := v_w^{-1} X_{\tau_i}(r) v_w$. Then $Y_{i, v}(\bbA)$ is a closed one-parameter subgroup of $U_2(\bbA)$ with $Y_{i, v}(F) = Y_{i, v}(\bbA) \cap U_2(F)$, so the $U_2$-integration decomposes along $Y_{i, v}(\bbA)$:
\[
    U_2(F) \backslash U_2(\bbA) \;\simeq\; \bigl(U_2(F)\, Y_{i, v}(\bbA) \backslash U_2(\bbA)\bigr) \times (F \backslash \bbA), \qquad du = dr \, du'.
\]
A direct computation shows that $v_w^{-1} X_{\tau_i}(r) v_w$ differs from $X_{\tau_i}(r)$ only by root subgroups on which $\psi_\calO$ is trivial; hence
    $\psi_\calO\bigl(Y_{i, v}(r)\, u'\bigr) = \psi(r)\, \psi_\calO(u')$. Substituting into \eqref{eq:beta-to-right} yields
\begin{equation}\label{eq:single-root-correct}
    \int_{U_2(F) Y_{i, v}(\bbA) \backslash U_2(\bbA)} \left( \int_{F \backslash \bbA} f_s\bigl(\gamma\, Y_{i, v}(r)\, u' \beta g\bigr)\, \psi(r)\, dr \right) \psi_\calO(u')\, du'.
\end{equation}

Using $\gamma = w v_w$ and $w X_{\tau_i}(r) w^{-1} = X_{w \tau_i}(\pm r)$, after replacing $r$ by $-r$ if necessary we obtain
$f_s\bigl(\gamma\, Y_{i, v}(r)\, u' \beta g\bigr) = f_s\bigl(X_{w \tau_i}(r)\, \gamma\, u' \beta g\bigr)$. Hence the vanishing of \eqref{eq:single-root-correct} depends only on the Weyl element $w$ through the position of $X_{w \tau_i}$.

If \(X_{w\tau_i}\subset P_k\), then the normalized inducing
character is trivial on \(X_{w\tau_i}\), since it is a unipotent
root subgroup of \(P_k\), and the left $P_k(\bbA)^1$-invariance of $f_s$ allows us to remove $X_{w \tau_i}(r)$ from the integrand. The inner $r$-integral reduces to $\int_{F \backslash \bbA} \psi(r)\, dr = 0$, so the summand vanishes. A necessary condition for the summand to contribute is therefore
\begin{equation}\label{eq:necessary}
    X_{w \tau_1} \not\subset P_k \quad \text{and} \quad X_{w \tau_2} \not\subset P_k.
\end{equation}
In the signed permutation model for $W_n$, condition \eqref{eq:necessary} is equivalent to the \emph{survival condition}
\begin{equation}\label{eq:survival}
    w(1), w(2) \in \{-1, \ldots, -k\}, \qquad w(3) \in \pm\{k+1, \ldots, n\}.
\end{equation}
Indeed, if $w(3) \notin \pm\{k+1, \ldots, n\}$ or one of $w(1), w(2)$ lies in $\pm\{k+1, \ldots, n\}$, then $w \tau_1$ or $w \tau_2$ lies in the Levi root system of $P_k$, contradicting \eqref{eq:necessary}.
\end{proof}

We further specialize the index set $\Sigma^+(M_2)$ in $\calR_w$. Define $\calS$ as the set of those positive roots of $M_2$ whose root subgroups $X_\alpha$ are not contained in $M_{\psi_\calO}$ as subgroups of $\Sp_{2n}$. We fix one choice of the Weyl element
\begin{equation}\label{choiceofw}
    w = \begin{pmatrix}
        & & I_{k-2} & & & & \\
        & & & & & & I_{2} \\
        & 1 & & & & & \\
        & & & I_{2(n-k-1)} & & & \\
        & & & & & 1 & \\
        -I_{2} & & & & & & \\
        & & & & I_{k-2} & &
    \end{pmatrix}.
\end{equation}
This $w$ satisfies the survival condition \eqref{eq:survival}: the $-I_2$ block sends $e_1 \mapsto -e_{k-1}$, $e_2 \mapsto -e_k$, and the upper $1$ block sends $e_3 \mapsto e_{k+1}$. The remaining action is $e_j \mapsto e_{j-3}$ for $4 \leq j \leq k+1$ and $e_j \mapsto e_j$ for $j \geq k+2$.

We compute $\calR_w$ for this $w$. For $\alpha = e_3 - e_j$ with $4 \leq j \leq k+1$: $w \alpha = e_{k+1} - e_{j-3}$ with $j - 3 \in \{1, \ldots, k-2\}$, which lies outside $\Sigma(P_k)$. Hence $\alpha \in \calR_w$. For $\alpha = e_3 - e_j$ with $j \geq k+2$: $w \alpha = e_{k+1} - e_j$ has both indices in $\{k+1, \ldots, n\}$, hence lies in $\Sigma(\Sp_{2(n-k)}) \subset \Sigma(P_k)$. Hence $\alpha \notin \calR_w$. For $\alpha = e_3 + e_j$ with $j \geq 4$: $w \alpha$ is a positive root in $\Sigma(U_k) \subset \Sigma(P_k)$. Hence $\alpha \notin \calR_w$. Therefore $\calR_w = \{e_3 - e_j : 4 \leq j \leq k+1\}$, yielding the explicit form
\begin{equation}\label{choicesofv}
    V_w = \left\{\left.\begin{pmatrix}
        I_2 & & & & & & \\
        & 1 & v & & & & \\
        & & I_{k-2} & & & & \\
        & & & I_{2(n-k-1)} & & & \\
        & & & & I_{k-2} & v^* & \\
        & & & & & 1 & \\
        & & & & & & I_2
    \end{pmatrix} \in \Sp_{2n}(F) \,\right|\, v \in F^{k-2}\right\}.
\end{equation}

\begin{lemma}
Any other family $w' V_{w'}$ with $w'$ satisfying \eqref{eq:survival} is equivalent to $w V_w$ under left $W(P_k)$ and right $W(Q_{\psi_\calO})$.
\end{lemma}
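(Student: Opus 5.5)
We argue in the signed-permutation model of $W_n$. The left action of $W(P_k)\cong S_k\times W_{n-k}$ permutes $\{1,\dots,k\}$ among themselves, and acts as an arbitrary signed permutation on $\{k+1,\dots,n\}$. The right action comes from the Weyl group of $M_{\psi_\calO}\cong \Sp_2\times\Sp_{2(n-3)}$, embedded via $\triangle$. The $\Sp_{2(n-3)}$ factor contributes all signed permutations of $\{4,\dots,n\}$. The $\Sp_2$ factor contributes the simultaneous sign change $e_1\mapsto -e_2$, $e_2\mapsto -e_1$ (together with the companion action on the $e_3$ slot), or more precisely the Weyl element of the diagonal $\Sp_2$ inside the $\GL_2$ block and the middle $\Sp_2$. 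Beyond this, right multiplication by elements of $M_{\psi_\calO}(F)\ltimes U_2(F)$ changes the unipotent part $v_{w'}$, so equivalence is meant at the level of the families $w'V_{w'}$, i.e.\ double cosets.

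Given $w'$ satisfying \eqref{eq:survival}, we normalize it in the following order.

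1. Use the left $S_k$ to move $w'(1),w'(2)$ to $-(k-1),-k$ in this order. If instead $w'(1)=-k$ and $w'(2)=-(k-1)$ is forced, we use the right $\Sp_2$ Weyl element, which swaps the roles of $1$ and $2$ up to sign.
2. Use the left $W_{n-k}$, which contains sign changes and permutations of $\{k+1,\dots,n\}$, to arrange $w'(3)=k+1$.
3. The remaining indices $\{4,\dots,n\}$ are mapped by $w'$ bijectively, up to sign, onto $\{\pm1,\dots,\pm(k-2)\}\cup\pm\{k+2,\dots,n\}$. Using the right signed permutations of $\{4,\dots,n\}$ together with the left $S_{k-2}\subset S_k$ and $W_{n-k-1}$, we reduce to the pattern $e_j\mapsto e_{j-3}$ for $4\le j\le k+1$ and $e_j\mapsto e_j$ for $j\ge k+2$.

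After these steps the normalized element is exactly the $w$ of \eqref{choiceofw}.

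The main obstacle is step 3. It must respect the survival condition, and the right $\Sp_{2(n-3)}$ Weyl group can send indices of $\{4,\dots,n\}$ to negative signs. Applied to an element of $\{4,\dots,n\}$ mapped into $\{1,\dots,k-2\}$, such a sign would produce $-e_{j'}$ with $j'\le k-2$, which the left $S_k$ cannot undo. So I would first apply the right signed permutation so that each index sent into $\pm\{1,\dots,k-2\}$ is sent to $+\{1,\dots,k-2\}$, and then sort the positive ones with the left $S_{k-2}$. For indices sent into $\pm\{k+2,\dots,n\}$, the left $W_{n-k-1}$ fixes any sign freely.

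Finally, after replacing $w'$ by $p\,w'\,q$ with $p\in W(P_k)$ and $q\in W(M_{\psi_\calO})$, we check that $V_{w'}$ transforms accordingly. For this we use $\calR_{pw'q}=q^{-1}\calR_{w'}$ up to roots absorbed into $P_k$ or $M_{\psi_\calO}$, computed exactly as for $\calR_w$ above. This shows $w'V_{w'}$ and $wV_w$ give the same union of double cosets in $P_k(F)\backslash\Sp_{2n}(F)/Q_{\psi_\calO}(F)$.
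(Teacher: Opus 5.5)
Your proposal is correct and follows essentially the paper's route: match $w'$ with $w$ on $\{1,2,3\}$ by a left element of $W(P_k)$, absorb the remaining discrepancy by a right element of $1\times W_{n-3}$, and then use $V_{pw}=V_w$ for $p\in W(P_k)$ (since $W(P_k)$ preserves $\Sigma(P_k)$) and $V_{wq}=q^{-1}V_wq$ for $q\in 1\times W_{n-3}$ (since $q$ preserves $\calS$). These identities hold exactly, not ``up to absorbed roots.''

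The $\Sp_2$ fallback in step 1 and the obstacle in step 3 never arise: $S_k$ is transitive on ordered pairs of distinct indices, and once $w'$ agrees with $w$ on $\{1,2,3\}$ the element $w'^{-1}w$ already lies in $1\times W_{n-3}$. That is just as well, because the $\Sp_2$ Weyl element acts by $1\leftrightarrow 2$, $3\mapsto -3$ (not as you describe) and does not preserve $\calS$, so the exact transformation rule for $\calR$ would fail for it.
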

\begin{proof}
Specifically, we construct $\sigma_\ell \in W(P_k)$ and $\sigma_r \in 1 \times W_{n-3} \subset W(Q_{\psi_\calO})$ such that
\begin{equation}\label{eq:family-equiv}
    \sigma_\ell \cdot w V_w \cdot \sigma_r = w' V_{w'}.
\end{equation}

\paragraph{Construction of $\sigma_\ell$ and $\sigma_r$.} Recall $W(P_k) \cong S_k \times W(\Sp_{2(n-k)})$, where $S_k$ permutes outputs in $\{1, \ldots, k\}$ as the symmetric group and $W(\Sp_{2(n-k)})$ acts on $\pm\{k+1, \ldots, n\}$ as signed permutations; the factor $W_{n-3}$ in $W(Q_{\psi_\calO})$ acts as signed permutations on $\{4, \ldots, n\}$ and fixes $\{1, 2, 3\}$.

Since $w(j), w'(j) \in \{-1, \ldots, -k\}$ for $j = 1, 2$ and $S_k$ acts transitively on ordered pairs of distinct elements of $\{-1, \ldots, -k\}$, choose $\tau \in S_k$ with $\tau(w(j)) = w'(j)$ for $j = 1, 2$. Since $w(3), w'(3) \in \pm\{k+1, \ldots, n\}$ and $W(\Sp_{2(n-k)})$ acts transitively on this set, choose $\rho \in W(\Sp_{2(n-k)})$ with $\rho(w(3)) = w'(3)$. Set $\sigma_\ell := (\tau, \rho) \in W(P_k)$. Then $\sigma_\ell w$ agrees with $w'$ on $\{1, 2, 3\}$, and consequently $\sigma_\ell w$ and $w'$ map $\{\pm 4, \ldots, \pm n\}$ to the same subset of $\{\pm 1, \ldots, \pm n\}$.

For $j \in \{4, \ldots, n\}$, define
    $\sigma_r(j) := w^{-1}\bigl(\sigma_\ell^{-1}(w'(j))\bigr)$. The argument $\sigma_\ell^{-1}(w'(j))$ lies in $w(\{\pm 4, \ldots, \pm n\})$, so $\sigma_r(j) \in \{\pm 4, \ldots, \pm n\}$ and $\sigma_r$ is a signed permutation of $\{4, \ldots, n\}$. Hence $\sigma_r \in 1 \times W_{n-3}$. Direct verification gives $w' = \sigma_\ell w \sigma_r$.

It remains to verify
\begin{equation}\label{eq:V-compat}
    V_{w'} = \sigma_r^{-1} V_w \sigma_r,
\end{equation}
which combined with $w' = \sigma_\ell w \sigma_r$ gives \eqref{eq:family-equiv}. We establish two structural identities.

For any $\sigma \in W(P_k)$ and $w \in W_n$, $V_{\sigma w} = V_w$. Indeed, $W(P_k) = W(M_k)$ stabilizes $\Sigma(M_k)$, and applying any $(\tau, \rho) \in S_k \times W(\Sp_{2(n-k)})$ to a root $e_a \pm e_b \in \Sigma(U_k)$ with $a \leq k < b$ yields $e_{\tau(a)} \pm \epsilon\, e_{\rho'(b)} \in \Sigma(U_k)$ regardless of the sign $\epsilon$, so $W(P_k)$ preserves $\Sigma(P_k)$ as a set. The condition $w \alpha \notin \Sigma(P_k)$ is therefore invariant under left-multiplication of $w$ by $\sigma$, giving $\calR_{\sigma w} = \calR_w$ and $V_{\sigma w} = V_w$.

For any $\sigma_r \in 1 \times W_{n-3}$ and $w \in W_n$, $V_{w \sigma_r} = \sigma_r^{-1} V_w \sigma_r$. Indeed, $\sigma_r$ fixes $e_3$ and permutes $\{e_4, \ldots, e_n\}$ as signed permutations, so it preserves $\calS$ as a set. Then for $\beta \in \calS$,
\[
    \beta \in \calR_{w \sigma_r} \iff w \sigma_r(\beta) \notin \Sigma(P_k) \iff \sigma_r(\beta) \in \calR_w \iff \beta \in \sigma_r^{-1}(\calR_w),
\]
giving $\calR_{w \sigma_r} = \sigma_r^{-1}(\calR_w)$. The subgroup identity follows from the standard formula $\sigma_r^{-1} X_\alpha \sigma_r = X_{\sigma_r^{-1}(\alpha)}$:
\[
    \sigma_r^{-1} V_w \sigma_r = \prod_{\alpha \in \calR_w} X_{\sigma_r^{-1}(\alpha)} = \prod_{\beta \in \sigma_r^{-1}(\calR_w)} X_\beta = \prod_{\beta \in \calR_{w \sigma_r}} X_\beta = V_{w \sigma_r}.
\]

Combining the two identities with $w' = \sigma_\ell w \sigma_r$ we get $V_{w'} = V_{\sigma_\ell w \sigma_r} = V_{w \sigma_r} = \sigma_r^{-1} V_w \sigma_r$, which is \eqref{eq:V-compat}. Hence \eqref{eq:family-equiv} holds, and $w V_w$ is a canonical representative of the unique Weyl-equivalence class of $\gamma$ that can contribute nontrivially.
\end{proof}

\begin{proposition}
    Let $H=U_2^w$. Then
\[
\begin{aligned}
\mathcal F_{\psi_{\calO}}(E_{f_s})(g)
&=
\sum_{\beta'
 \in P_{k-2,2(n-3)}(F)\backslash\Sp_{2(n-3)}(F)}
\int_{H(F)\backslash U_2(\mathbb A)}
f_s\bigl(wu\iota_c(\beta')g\bigr)\psi_{\calO}(u)\,du\\
&=
\sum_{\beta'
 \in P_{k-2,2(n-3)}(F)\backslash\Sp_{2(n-3)}(F)}
\int_{H(\mathbb A)\backslash U_2(\mathbb A)}
\int_{H(F)\backslash H(\mathbb A)}
f_s\bigl(whu\iota_c(\beta')g\bigr)
\psi_{\calO}(hu)\,dh\,d\dot u\\
&=
\sum_{\beta'
 \in P_{k-2,2(n-3)}(F)\backslash\Sp_{2(n-3)}(F)}
\int_{H(\mathbb A)\backslash U_2(\mathbb A)}
f_s\bigl(wu\iota_c(\beta')g\bigr)
\psi_{\calO}(u)\,d\dot u.
\end{aligned}
\]
Here we used
$wH(\mathbb A)w^{-1}
\subset
(U\cap M_k)(\mathbb A)
\subset
P_k(\mathbb A)$ together with $\operatorname{vol}(H(F)\backslash H(\mathbb A))=1$.
\end{proposition}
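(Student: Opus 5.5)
Starting from the double-coset expansion \eqref{int400}, the preceding lemmas reduce the sum over $P_k(F)\backslash\Sp_{2n}(F)/Q_{\psi_\calO}(F)$ to the single family $wV_w$, with $w$ as in \eqref{choiceofw} and $V_w$ as in \eqref{choicesofv}; every other double coset contributes zero. The first step is therefore to describe the surviving sum over $\gamma=wv$ ($v\in V_w(F)$) and $\beta\in Q^\gamma_{\psi_\calO}(F)\backslash Q_{\psi_\calO}(F)$. I would split $\beta=\triangle(g_1,h)u_2$. Then I would show that the part of the stabilizer that survives is exactly the $\Sp_{2(n-3)}$-factor embedded via $\iota_c$. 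Since $w$ sends $e_4,\dots,e_{k+1}$ to $e_1,\dots,e_{k-2}$ and fixes $e_j$ for $j\ge k+2$, the group $w\iota_c(\Sp_{2(n-3)})w^{-1}$ meets $P_k$ precisely in $w\iota_c(P_{k-2,2(n-3)})w^{-1}$, and this gives the index set $P_{k-2,2(n-3)}(F)\backslash\Sp_{2(n-3)}(F)$. The $\Sp_2$-factor $\triangle(g_1,1)$ and the unipotent $V_w(F)$ do not enter the sum separately. Combined with $v$, the $\Sp_2$-part either lands in $P_k$ after conjugation by $w$ or can be absorbed by the change of variables $u\mapsto \beta^{-1}u\beta$ of \eqref{eq:beta-to-right}. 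As a result, the sum over $v$ and over $U_2(F)$ collapses into an integral over $H(F)\backslash U_2(\bbA)$, where $H=U_2^w=\{u\in U_2: wuw^{-1}\in P_k\}$. Concretely, for each root $\alpha$ of $U_2$ with $w\alpha\notin\Sigma(P_k)$, I would pair $X_\alpha(F)$ with a root of $V_w$ and unfold. This bookkeeping yields the first line of the claimed identity.

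The second line is purely measure-theoretic. Since $H$ is a unipotent subgroup of $U_2$ defined over $F$, there is a quotient decomposition $H(F)\backslash U_2(\bbA)=H(F)\backslash H(\bbA)\times H(\bbA)\backslash U_2(\bbA)$, and the integral factors as an inner integral over $h$ and an outer integral over $\dot u$.

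For the third line, I would verify root by root that $wH w^{-1}\subset U\cap M_k\subset P_k$. For this one lists the roots $e_i\pm e_j$ ($i\le2$) of $U_2$ and applies $w$. It then follows that $f_s(whu\cdots)=f_s(wu\cdots)$, because the inducing character is trivial on unipotents of the Levi. Next I would check that $\psi_\calO$ is trivial on $H(\bbA)$: its roots $\tau_1,\tau_2$ satisfy $w\tau_1=-e_{k-1}-e_{k+1}$ and $w\tau_2=e_{k+1}-e_k$, neither of which lies in $P_k$, so $X_{\tau_i}\not\subset H$. Once both facts hold, the inner integral equals $\mathrm{vol}(H(F)\backslash H(\bbA))=1$.

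The main obstacle is the first step: identifying $Q^\gamma_{\psi_\calO}(F)\backslash Q_{\psi_\calO}(F)$ together with the $V_w$-sum, and showing cleanly that exactly $P_{k-2,2(n-3)}\backslash\Sp_{2(n-3)}$ remains while the $\Sp_2$ and $V_w$ contributions merge with $U_2(F)$ into $H(F)\backslash U_2(\bbA)$. I would attack it by explicit matrix conjugation with the representative \eqref{choiceofw}, checking the $\Sp_2$-factor on its root subgroups and torus.
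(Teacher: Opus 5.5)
You handle the second and third lines correctly. $H$ is spanned by $X_{e_1+e_j}$ and $X_{e_2+e_j}$ for $4\le j\le k+1$, and $w$ sends these to positive roots of $\GL_k$. Also, $\psi_\calO$ is trivial on $H(\bbA)$ because $w\tau_1,w\tau_2\notin\Sigma(P_k)$.

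The gap is in the first step, in what you do with $V_w(F)$. The representatives $wv$ with $v\neq0$ give their own summands in \eqref{int400}, separate from the summand of $\gamma=w$. Nothing from them can be ``unfolded'' into the $U_2$-integration. The domain $H(F)\backslash U_2(\bbA)$ comes exactly from the $H(F)\backslash U_2(F)$-component of $\beta\in Q^w_{\psi_\calO}(F)\backslash Q_{\psi_\calO}(F)$, absorbed into $\int_{U_2(F)\backslash U_2(\bbA)}$. Merging the $v$-sum into it as well would overcount.

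Your concrete recipe also cannot be carried out. There are $4n-2k-1$ roots $\alpha$ of $U_2$ with $w\alpha\notin\Sigma(P_k)$, but only $k-2$ roots in $V_w$. Moreover $V_w\subset M_2$, while those root groups lie in $U_2$. The stabilizer is not uniform in $v$ either. For $v\neq0$, $Q^{wv}_{\psi_\calO}$ does not contain $\triangle(\Sp_2,1)$: conjugating $X_{-2e_3}$ by $X_{e_3-e_j}$ produces $X_{-e_3-e_j}$ and $X_{-2e_j}$, and $w$ sends both of these into the opposite radical of $P_k$.

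What is missing is a proof that the $v\neq0$ terms vanish. Put
$n(\delta,r)=X_{e_2+e_3}(\delta)\prod_{j=4}^{k+1}X_{e_2+e_j}(r_{j-3})\in U_2$.
The commutator of $X_{e_3-e_j}$ and $X_{e_2+e_j}$ lies in $X_{e_2+e_3}=X_{\tau_2}$, so, as in \eqref{viotaconj}, $v\,n(\delta,r)\,v^{-1}=n(\delta+\sum_i r_iv_i,\,r)$. Hence $n(-\sum_i r_iv_i,\,r)=v^{-1}n(0,r)v$, and $w\,n(0,r)\,w^{-1}$ is a unipotent element of $\GL_k\subset P_k$.

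It follows that left translation of $u$ by $n(-\sum_i r_iv_i,\,r)$ leaves $f_s(wvu\,\cdot)$ unchanged. It does, however, multiply $\psi_\calO(u)$ by $\psi(-\sum_i r_iv_i)$, since $\psi_\calO$ sees the $\tau_2$-coordinate. Integrating over $r\in(F\backslash\bbA)^{k-2}$, orthogonality of characters kills the whole summand unless $v=0$.

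Only after this reduction to $\gamma=w$ do you have $Q^w_{\psi_\calO}=\triangle(\Sp_2\times P_{k-2,2(n-3)})\ltimes H$. From there the first line follows by absorbing the $H(F)\backslash U_2(F)$-part of the $\beta$-sum into the $U_2$-integral, as you describe.
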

\begin{proof}
\newcommand{\nTwoPlus}{n_{2,+}^{k+1}}

Fix $v=(v_1,\dots,v_{k-2})\in F^{k-2}$ and embed it in $V_w$ via \eqref{choicesofv}.
Let $r=(r_1,\dots,r_{k-2})\in \bbA^{k-2}$. Define the $(k-1)$-tuple
\begin{equation}\label{rtuple}
    \bigl(\delta(r), r\bigr) := \bigl(-\sum_{i=1}^{k-2}r_iv_i,\, r_1,\, r_2,\, \dots,\, r_{k-2}\bigr) \in \bbA^{k-1},
\end{equation}
where $\delta(r) := -\sum_{i=1}^{k-2} r_i v_i$. Define
    $\nTwoPlus\bigl((a_3, \dots, a_{k+1})\bigr) := \prod_{j=3}^{k+1} X_{e_2 + e_j}(a_j),
    $ where we have $(a_3, \dots, a_{k+1}) \in \bbA^{k-1}$.

By construction $n_{2,+}^{k+1}((\delta(r),r))\in U_2(\bbA)$. Write $u=n_{2,+}^{k+1}((\delta(r),r))\,u'$ with $u'\in U_2(\bbA)$. We may conjugate $n_{2,+}^{k+1}((\delta(r),r))$ to the left.
The only nontrivial effect comes from conjugation by $v$:
a direct computation (in the unipotent coordinates) gives
\begin{equation}\label{viotaconj}
v\,n_{2,+}^{k+1}((\delta(r),r))\,v^{-1}=n_{2,+}^{k+1}\!\bigl(\,\delta(r)+\sum_{i=1}^{k-2}r_i v_i,r\bigr),
\qquad \delta\in \bbA,
\end{equation}
so that, with $\delta=\delta(r)$ as in \eqref{rtuple}, we obtain
$v\,n_{2,+}^{k+1}((\delta(r),r))\,v^{-1}=n_{2,+}^{k+1}((0,r))$. Consequently,
$f_s\bigl(wv\,n_{2,+}^{k+1}((\delta(r),r))\,u' \beta g\bigr)
=f_s\bigl(w\,n_{2,+}^{k+1}(0,r)\,v\,u'\beta g\bigr)$.

Note that $wn_{2,+}^{k+1}(0,r)w^{-1}$ is in $P_k(\bbA)$, so by the left $P_k(\bbA)^1$-invariance of $f_s$, we have $f_s(wn_{2,+}^{k+1}(0,r)vu'\beta g) = f_s(wvu'\beta g)$, independent of $r$. On the other hand, $\psi_\calO$ is nontrivial on $n_{2,+}^{k+1}((\delta(r),r))$, and by the definition of
$\delta(r)$ in \eqref{rtuple} we have
\[
\psi_\calO\bigl(n_{2,+}^{k+1}((\delta(r),r))\bigr)
=\psi\!\left(-\sum_{i=1}^{k-2}r_i v_i\right)
=\prod_{i=1}^{k-2}\psi(-r_i v_i).
\]
Thus, for any fixed $v\in V_w$, the Fourier coefficient contains the inner integral
\[
\int_{(F\backslash\bbA)^{k-2}}
\left(\prod_{i=1}^{k-2}\psi(-r_i v_i)\right)\,\prod_{i=1}^{k-2}dr_i.
\]
By orthogonality of additive characters, this inner integral is nonzero only if $v_i=0$ for all
$i=1,\dots,k-2$. Hence we may drop the summation over the discrete unipotent subgroup $V_w$. Since $Q_{\psi_\calO}^w$ is independent of the choice of $v$, we obtain
\begin{equation}\label{eq:afterV}
F_{\psi_\calO}(E_{f_s}(g))
=\int_{U_2(F)\setminus U_2(\bbA)}
\sum_{\beta\in Q_{\psi_\calO}^w(F)\setminus Q_{\psi_\calO}(F)}
f_s(w u\beta g)\,\psi_\calO(u)\,du.
\end{equation}

Furthermore, since $M^w_{\psi_{\mathcal O}}\simeq \Sp_2\times P_{k-2,2(n-3)}$, every class
$\beta\in Q^w_{\psi_{\mathcal O}}(F)\backslash Q_{\psi_{\mathcal O}}(F)$ may be represented by a pair
\[
\beta'\in P_{k-2,2(n-3)}(F)\backslash \Sp_{2(n-3)}(F),
\qquad
u_0\in U_2^w(F)\backslash U_2(F),
\]
corresponding to its Levi and unipotent components, respectively. Substitute this into \eqref{eq:afterV}. Since $\psi_{\mathcal O}$ is trivial on
$U_2^w(\bbA)$ and $U_{2,w}(\bbA)\simeq U_2^w(\bbA)\backslash U_2(\bbA)$, the summation over $U_2^w(F)\backslash U_2(F)$ can be absorbed into the integration over
$U_2(F)\backslash U_2(\bbA)$, giving the second line of the above. This proves the proposition.
\end{proof}

The Fourier coefficient $F_{\psi_\calO}(E_{f_s}(g))$ is a periodic function on the rational points of $M_{\psi_\calO}(\bbA)$, then on $\Sp_{2(n-3)}(\bbA)$ by restriction. We have the following main theorem:
\begin{theorem}\label{thm401}
  Let $E_{f_s}(g)$ be an Eisenstein series associated with the parabolic induction $ I_n(s)=\Ind_{P_k(\bbA)}^{\Sp_{2n}(\bbA)}\left(\chi|\det|^s\otimes \id_{\Sp_{2(n-k)}}\right)$. For a fixed generic character $\psi_\calO$, consider its Fourier coefficient $F_{\psi_\calO}(E_{f_s}(g))$. For $\Re(s)$ sufficiently large, as a function on $\Sp_{2(n-3)}(\bbA)$, $F_{\psi_\calO}(E_{f_s}(g))$ is an Eisenstein series associated to the parabolic induction 
  \[
  I_{n-3}(s) = \Ind_{P_{k-2,2(n-3)}(\bbA)}^{\Sp_{2(n-3)}(\bbA)} \chi|\det|^{s}\otimes \id_{\Sp_{2(n-k-1)}}.
  \]
  The corresponding section attached to this Eisenstein series continues to a meromorphic function on $\bbC$. Furthermore, at every point \(s_0\) with \(\Re(s_0)\ge0\) at which
the meromorphically continued local descent operator is regular,
the local map is surjective at non-Archimedean places and has dense
image at Archimedean places.
\end{theorem}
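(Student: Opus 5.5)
The plan is to start from the last line of the preceding Proposition, which already expresses
\[
F_{\psi_\calO}(E_{f_s})(\iota_c(g))=\sum_{\beta'\in P_{k-2,2(n-3)}(F)\backslash \Sp_{2(n-3)}(F)} \int_{H(\bbA)\backslash U_2(\bbA)} f_s\bigl(wu\iota_c(\beta')\iota_c(g)\bigr)\psi_\calO(u)\,d\dot u .
\]
I will define the descended section by
\[
\calF_s(f_s)(g):=\int_{H(\bbA)\backslash U_2(\bbA)} f_s\bigl(wu\iota_c(g)\bigr)\psi_\calO(u)\,d\dot u ,\qquad g\in\Sp_{2(n-3)}(\bbA),
\]
so that the Fourier coefficient becomes $\sum_{\beta'}\calF_s(f_s)(\beta' g)$. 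This is formally the Eisenstein series (\ref{eis1}) on $\Sp_{2(n-3)}$ for the parabolic $P_{k-2,2(n-3)}$.

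The first step is to check that $\calF_s(f_s)$ lies in $I_{n-3}(s)$ and that the integral converges absolutely for $\Re(s)\gg0$.

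\emph{Equivariance.} For $p=m u_p$ in $P_{k-2,2(n-3)}(\bbA)$, with $m=(a,h)$, $a\in\GL_{k-2}$, $h\in \Sp_{2(n-k-1)}$, I conjugate $\iota_c(p)$ across $u$ by the change of variables $u\mapsto \iota_c(p)u\iota_c(p)^{-1}$. This map preserves $\psi_\calO$, because $\iota_c(\Sp_{2(n-3)})$ lies in the stabilizer $M_{\psi_\calO}$. It also rescales the measure on $H(\bbA)\backslash U_2(\bbA)$ by a power of $|\det a|$. Next, $w\iota_c(p)w^{-1}$ lands in $P_k$: the $\GL_{k-2}$-block goes to the first $k-2$ coordinates and the $\Sp_{2(n-k-1)}$-block goes into the $\Sp_{2(n-k)}$ factor, as one reads off from (\ref{choiceofw}). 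It acts on $f_s$ by $\chi(\det a)|\det a|^{s}$ times a power of $|\det a|$ coming from $\delta_{P_k}^{1/2}$.

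\emph{Exponent bookkeeping.} The two powers of $|\det a|$ must combine to $\delta_{P_{k-2,2(n-3)}}^{1/2}$. Concretely, $\delta_{P_k}^{1/2}$ restricted to $\GL_{k-2}$ gives $|\det a|^{(2n-k+1)/2}$. The Jacobian should supply $|\det a|^{-3}$, coming from the roots $e_i\pm e_j$ with $i\le k-2$ and $j\in\{k-1,k,k+1\}$ in $w^{-1}(\ldots)$. The net exponent $(2(n-3)-(k-2)+1)/2$ is the desired one.

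\emph{Convergence.} Absolute convergence for $\Re(s)\gg0$ follows by bounding $|f_s|$ by the spherical section at $\Re(s)$. This reduces the question to a Gindikin--Karpelevich type integral over the unipotent group $w(H\backslash U_2)w^{-1}$, which is part of an intertwining integral and converges in a right half-plane.

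Since everything is an Euler product for factorizable $f_s$, I will then define the local operators $\calF_{v,s}$. For the meromorphic continuation, I will compute the unramified value $\calF_{v,s}(f^\circ_{n,v,s})=c_v(s)f^\circ_{n-3,v,s}$. Using root exchange, I will first convert the $\psi$-twisted integral over $H\backslash U_2$ into an honest intertwining integral for a Weyl element of length matching the three $L$-factors in $c_v(s)$. Then Casselman--Shalika and Gindikin--Karpelevich give $c_v(s)$. At ramified places I will use the standard argument: expand along a filtration of $P_k\backslash\Sp_{2n}$ and use Bernstein's continuation principle (or rationality in $q^{-s}$ non-Archimedean, and Fréchet-holomorphic families Archimedean). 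Together these show the continued section is meromorphic in $s$.

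The last and hardest part is the local surjectivity and density at regular $s_0$ with $\Re(s_0)\ge0$.

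\emph{Non-Archimedean places.} I will show that the image of $\calF_{v,s_0}$ contains a $K$-fixed-type generating vector. For this I take $f_{s}$ supported in the open cell $P_k w U_2\iota_c(\Sp_{2(n-3)})$ with compact support there. For such $f_s$, the integral becomes a convolution. By choosing $f_s$ on $U_2$ as a Schwartz function with prescribed Fourier transform against $\psi_\calO$, the image realizes every compactly supported-mod-$P_{k-2}$ section, and hence all of $I_{n-3,v}(s_0)$. This holds because $P_{k-2,2(n-3)}\backslash\Sp_{2(n-3)}$ is compact, so such sections span everything via translation. Regularity of the family at $s_0$ guarantees that continuation commutes with this restriction.

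\emph{Archimedean places.} The same construction gives a dense subspace: smooth compactly supported data on the open cell map onto a dense subspace in the smooth Fréchet topology.

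\emph{Main obstacle.} The main obstacle I expect is making the open-cell argument compatible with the meromorphic continuation at $s_0$. The condition $\Re(s_0)\ge0$ should be what keeps the open-cell integrals convergent, or at least holomorphic, there. If direct convergence fails, I will instead use that the image is a nonzero $\Sp_{2(n-3)}$-subrepresentation and argue through the structure of $I_{n-3,v}(s_0)$ for $\Re(s_0)\ge0$, where it is generated by its spherical or open-cell vectors.
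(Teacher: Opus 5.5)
\textbf{Equivariance exponent.} Your Jacobian count is wrong, and as written it contradicts the conclusion. The only coordinates of $H\backslash U_2$ moved by the $\GL_{k-2}$-block of $P_{k-2,2(n-3)}$ (acting on $e_4,\dots,e_{k+1}$) are the roots $e_1-e_j,\ e_2-e_j$ with $4\le j\le k+1$. This is a $2\times(k-2)$ block, with $w$-images $-(e_{j-3}+e_{k-1})$ and $-(e_{j-3}+e_k)$. The partner roots $e_1+e_j,\ e_2+e_j$ are exactly the roots of $H$. No root of $U_2$ is carried by $w$ to $\pm e_i\pm e_{k+1}$ with $i\le k-2$, because $w^{-1}$ sends these to $\pm e_{i+3}\pm e_3$, which are roots of the Levi $M_2$; that is the $V_w$ direction, already eliminated. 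Hence the Jacobian is $|\det a|^{-2}$, not $|\det a|^{-3}$. Your arithmetic also does not close: $\tfrac{2n-k+1}{2}-3=\tfrac{2n-k-5}{2}$, whereas $\delta_{P_{k-2,2(n-3)}}^{1/2}$ needs $\tfrac{2n-k-3}{2}$. With $-3$ the descended section would lie in $I_{n-3}(s-1)$, not $I_{n-3}(s)$. With $-2$ you recover the computation of Lemma~\ref{lem:global-section}.

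\textbf{Surjectivity.} Your construction rests on a false premise: $P_kwU_2\iota_c(\Sp_{2(n-3)})$ is not open. Its image in $P_k\backslash\Sp_{2n}$ has dimension at most $(4n-2k-1)+\dim(P_{k-2,2(n-3)}\backslash\Sp_{2(n-3)})$, where $4n-2k-1=\dim(H\backslash U_2)$. This is $2(k-2)\ge 2$ less than $\dim P_k\backslash\Sp_{2n}$, so no nonzero smooth section is supported there. The missing ingredient is the geometric core of Proposition~\ref{prop4}, in three parts:
\begin{enumerate}
\item the map $(u,g)\mapsto P_kwu\iota_c(g)$, with $u\in H\backslash U_2$ and $g$ in the open Bruhat cell of $P_{k-2,2(n-3)}\backslash\Sp_{2(n-3)}$, is an isomorphism onto a \emph{locally closed} subvariety $\mathcal Z$;
\item the values of $\calF(f)$ on that open cell depend only on $f|_{\mathcal Z}$;
\item compactly supported data $h(u)\varphi(g)$ on $\mathcal Z$ extend to smooth sections, via compact open product charts, resp.\ a tubular neighbourhood and a cutoff.
\end{enumerate}

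Once this is repaired, your finish is genuinely different from the paper's and it works. Evaluating the continued family at an open-cell point gives a meromorphic function that agrees with an integral over a compact set, hence an entire one. So at a regular $s_0$ the image contains every section compactly supported in the open cell. Equivariance under $\Sp_{2(n-3)}(F_v)$ and a finite partition of unity on the compact quotient then give all of $I_{n-3,v}(s_0)$. The paper instead pairs the image against a contragredient annihilator $\xi_0$, shows by the identity theorem that $\xi_0$ vanishes on the open cell, and concludes by finite length (non-Archimedean) or Hahn--Banach (Archimedean). Your route avoids the finite-length input, and for smooth sections it gives more than density at Archimedean places.

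Two smaller points. The hypothesis $\Re(s_0)\ge 0$ plays no role, since the test integrals are entire in $s$, so the obstacle you anticipate is not one. Your Bernstein-principle sketch for meromorphic continuation needs a generic uniqueness statement for the twisted equivariant functionals, which you do not supply. The paper instead removes the $y$-coordinates by root exchange and reduces to generalized Jacquet integrals.
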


The proof of this theorem requires unfolding the Eisenstein series and carefully computing the local Fourier coefficients. These are treated in Section 4. In the remainder of this section, we reduce to local calculations. The proof of Theorem \ref{thm401} is then completed in Section 5.

\begin{lemma}\label{lem:global-section}
Consider the function on $g\in \Sp_{2(n-3)}(\bbA)$, where $g$ is embedded into the central block of $\Sp_{2n}(\bbA)$, denoted by $\iota_{c}$, defined by
\begin{equation}\label{int405}
  \calF(f_s,g) \;:=\; \int_{U_{2,w}(\bbA)} f_s\!\bigl(wu\iota_c(g)\bigr)\,\psi_\calO(u)\,du.
\end{equation}
Then $\calF(f_s,\cdot)$ defines a meromorphic section of $I_{n-3}(s)$.
Moreover, if $f_s=\prod_v f_{s,v}$ is factorizable, then
\begin{equation}\label{int407}
  \calF(f_s,g)
  \;=\;
  \prod_v \int_{U_{2,w}(F_v)} f_{s,v}\!\bigl(wu_v\iota_c(g_v)\bigr)\,\psi_{\calO,v}(u_v)\,du_v,
\end{equation}
where $\psi_\calO=\prod_v \psi_{\calO,v}$ and $g=(g_v)_v$.
\end{lemma}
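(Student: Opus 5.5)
The plan has three parts: prove that $\calF(f_s,\cdot)$ transforms correctly under $P_{k-2,2(n-3)}(\bbA)$, prove that it converges and continues meromorphically, and then factor it over the places.

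\textbf{Equivariance.} Take $p=j'(h_1,h_2)u''\in P_{k-2,2(n-3)}(\bbA)$, with $h_1\in\GL_{k-2}$, $h_2\in\Sp_{2(n-k-1)}$ and $u''$ in the unipotent radical. Write $\iota_c(p)$ for its image in $\Sp_{2n}$.
\begin{enumerate}
\item Since $\iota_c(\Sp_{2(n-3)})$ lies in the stabilizer $M_{\psi_\calO}$, conjugation by $\iota_c(p)$ normalizes $U_2$ and fixes $\psi_\calO$. I expect it also normalizes $U_{2,w}$, viewed as the complement of $U_2^w$ in $U_2$, after identifying $U_{2,w}$ with $U_2^w(\bbA)\backslash U_2(\bbA)$ as in the preceding proposition.
\item Substitute $u\mapsto \iota_c(p)u\iota_c(p)^{-1}$. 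This produces the modulus $|\det\operatorname{Ad}(\iota_c(p))|_{U_{2,w}}|^{-1}$.
\item Move $w\iota_c(p)w^{-1}$ to the left. By the explicit form \eqref{choiceofw} of $w$, this element lies in $P_k(\bbA)$. Its $\GL_k$-component contains $h_1$ in the first $k-2$ diagonal slots, its $\Sp_{2(n-k)}$-component contains $h_2$, and the image of $u''$ lies in $U_k\cdot(U\cap M_k)$.
\item By \eqref{eq301}, $f_s$ then picks up $\chi(\det h_1)|\det h_1|^s\,\delta_{P_k}^{1/2}$.
\end{enumerate}
The key computation is the identity
\[
\delta_{P_k}^{1/2}\bigl(w\iota_c(p)w^{-1}\bigr)\,\bigl|\det\operatorname{Ad}(\iota_c(p))|_{U_{2,w}}\bigr|^{-1}=\delta_{P_{k-2,2(n-3)}}^{1/2}(p).
\]
I will check it by counting roots. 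The two $\delta^{1/2}$ factors have exponents $(2n-k+1)/2$ and $(2(n-3)-(k-2)+1)/2$ in $|\det h_1|$. The roots in $U_{2,w}$ that involve the indices $4,\dots,k+1$ must account for the difference, which is $2$, so $|\det h_1|^{-2}$ has to come from the modulus. The dependence on $h_2$ cancels because $\Sp_{2(n-k-1)}$ acts unimodularly. This bookkeeping is where I expect most of the work.

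\textbf{Convergence and meromorphy.} Reduce $\calF$ to an intertwining integral. Set $U_{2,w}=U'\times U''$, where $U''$ is the part on which $\psi_\calO$ is nontrivial, namely the roots $e_1-e_3$ and $e_2+e_3$ and their mixing with the other roots. On the part where $\psi_\calO$ is trivial, the integral is a partial intertwining operator of the form $M(w',s)$ as in Section 2.3, up to a root exchange. For $\Re(s)\gg0$ it converges absolutely, by the Gindikin--Karpelevich estimate applied to the dominant exponent of $\Lambda_s$. The remaining integral along the characters is a Jacquet-type integral over a compact-modulo-torus set, and it converges absolutely too.

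Meromorphic continuation of the resulting section then follows from that of $M(w,s)$ (\cite{BL24}) together with the local analysis in Section 4. I expect this to be the main obstacle: making the root exchange rigorous and verifying that the nontrivial character does not ruin the continuation. For a $K$-finite holomorphic section, I will try to show that $\calF$ lies in a finite-dimensional span of sections of the form (rational function of $q_v^{-s}$ or $\Gamma$-factors) times a standard section.

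\textbf{Factorization.} Once absolute convergence holds for $\Re(s)\gg0$, factorizability is immediate. $U_{2,w}(\bbA)$ is the restricted product of the $U_{2,w}(F_v)$, the Haar measure factors, and so do $\psi_\calO$ and $f_s$. Fubini then gives \eqref{int407}, where the product converges because almost all local factors are the unramified values computed in Section 4. The identity extends to all $s$ by meromorphic continuation.
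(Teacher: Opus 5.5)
Your plan is the paper's proof. The $P_{k-2,2(n-3)}(\bbA)$-equivariance comes from the change of variables on $U_2^w\backslash U_2$, whose only non-unimodular part is the $2\times(k-2)$ block of roots $e_i-e_j$ ($i=1,2$, $4\le j\le k+1$); this block contributes $|\det h_1|^{-2}$ and offsets $\delta_{P_k}^{1/2}/\delta_{P_{k-2,2(n-3)}}^{1/2}=|\det h_1|^{2}$. Convergence and continuation are taken from the local analysis of Section~4, and the Euler product comes from Fubini. One sign to fix: substituting $u\mapsto\iota_c(p)u\iota_c(p)^{-1}$ produces $\bigl|\det\operatorname{Ad}(\iota_c(p))|_{U_{2,w}}\bigr|$ to the power $+1$, which already equals $|\det h_1|^{-2}$, so the exponent $-1$ in your displayed identity should be $+1$.
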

\begin{proof}
We have
\[
  F_{\psi_\calO}(E_{f_s}(g))
  =
  \sum_{\beta \in P_{k-2,2(n-3)}(F)\backslash \Sp_{2(n-3)}(F)}
  \calF(f_s,\beta g),
  \qquad g\in \Sp_{2(n-3)}(\bbA).
\]
Thus it suffices to prove that \(\calF(f_s,\cdot)\) satisfies the
\(P_{k-2,2(n-3)}(\bbA)\)-equivariance of \(I_{n-3}(s)\).

Let  $P' := P_{k-2,2(n-3)}$. Take $p'=(h_1,h_2)v\in P'(\bbA)$, where
  $(h_1,h_2)\in
  \GL_{k-2}(\bbA)\times \Sp_{2(n-k-1)}(\bbA),
  v\in U_{P'}(\bbA)$. For the Weyl element \(w\) in the above, we have $w(e_j)=e_{j-3},\text{ where } 4\leq j\leq k+1$. Hence the \(\GL_{k-2}\)-block of \(P'\), which under
\(\iota_{c}\) acts on $\langle e_4,\ldots,e_{k+1}\rangle$, is carried by conjugation with \(w\) into the \(\GL_k\)-block of \(P_k\). More
precisely,
  $w\iota_c(p')w^{-1}
  =
  p_w(p')\in P_k(\bbA)$. If the Levi component of \(p_w(p')\) is written as $m_k(p')=(a(p'),b(p'))\in
  \GL_k(\bbA)\times \Sp_{2(n-k)}(\bbA)$, then
\[
  a(p')=\diag(h_1,I_2)
  \quad\text{up to multiplication by unipotent factors,}
\]
and therefore \begin{equation}\label{eq:det-pw-triv}
  \det a(p')=\det h_1.
\end{equation} The unipotent part \(v\) is carried into the unipotent radical \(U_k\) of
\(P_k\), and the \(\Sp_{2(n-k-1)}\)-factor is carried into the
\(\Sp_{2(n-k)}\)-factor of the Levi of \(P_k\).

We compute
\[
\begin{aligned}
  \calF(f_s,p'g)
  &=
  \int_{U_{2,w}(\bbA)}
    f_s\!\bigl(wu\,\iota_{c}(p'g)\bigr)\psi_\calO(u)\,du  \\
  &=
  \int_{U_{2,w}(\bbA)}
    f_s\!\bigl(wu\,\iota_{c}(p')\iota_{c}(g)\bigr)
    \psi_\calO(u)\,du.
\end{aligned}
\]

Since $w\iota_{c}(p')=p_w(p')w$, we have
\[
  wu\iota_{c}(p')
  =
  w\iota_{c}(p')
  \left(\iota_{c}(p')^{-1}u\iota_{c}(p')\right)
  =
  p_w(p')w
  \left(\iota_{c}(p')^{-1}u\iota_{c}(p')\right).
\]
Under the central embedding, \(\iota_{c}(p')\) preserves the quotient $U_{2,w}=U_2^w\backslash U_2$, and hence induces a change of variables on \(U_{2,w}\). Put 
  $u'=\iota_{c}(p')^{-1}u\iota_{c}(p')$. The character \(\psi_\calO\) is preserved under this change of variables. Thus,
by the \(P_k(\bbA)\)-equivariance of \(f_s\),
\[
\begin{aligned}
  \calF(f_s,p'g)
  &=
  \chi(\det h_1)|\det h_1|^s
  \delta_{P_k}^{1/2}(p_w(p'))  \\
  &\qquad\qquad\cdot
  \int_{U_{2,w}(\bbA)}
    f_s(wu'\iota_{c}(g))\psi_\calO(u')\,
    d\!\left(\iota_{c}(p')u'\iota_{c}(p')^{-1}\right).
\end{aligned}
\]

It remains to compute the Jacobian of this change of variables on \(U_{2,w}\).
In the coordinates \(u(x,y,z)\) for \(U_{2,w}\), the quotient coordinate  $y\in \Mat_{2\times(k-2)}(\bbA)$ is transformed by $y\longmapsto yh_1^{-1}$.

The \(x\)- and \(z\)-coordinates undergo determinant-one affine transformations and
translations preserving the character. Since \(y\) has two rows,
  $d(yh_1^{-1})=|\det h_1|^{-2}dy$. Thus the quotient measure transforms by $
  du=|\det h_1|^{-2}du'$, and therefore
\[
  \calF(f_s,p'g)
  =
  \chi(\det h_1)|\det h_1|^s
  \delta_{P_k}^{1/2}(p_w(p'))
  |\det h_1|^{-2}
  \calF(f_s,g).
\]

We now compare modulus characters. Since \(P_k\subset \Sp_{2n}\) has Levi
\(\GL_k\times \Sp_{2(n-k)}\), we have $ \delta_{P_k}(a,h)=|\det a|^{2n-k+1}$.
Using \eqref{eq:det-pw-triv}, we get
 $ \delta_{P_k}^{1/2}(p_w(p'))
  =
  |\det h_1|^{(2n-k+1)/2}$. On the other hand, since \(P'=P_{k-2,2(n-3)}\subset \Sp_{2(n-3)}\) has Levi
\(\GL_{k-2}\times \Sp_{2(n-k-1)}\),
\[
  \delta_{P'}(h_1,h_2)
  =
  |\det h_1|^{2(n-3)-(k-2)+1}
  =
  |\det h_1|^{2n-k-3}.
\]
Thus
\[
  \delta_{P_k}^{1/2}(p_w(p'))|\det h_1|^{-2}
  =
  |\det h_1|^{(2n-k+1)/2-2}
  =
  |\det h_1|^{(2n-k-3)/2}
  =
  \delta_{P'}^{1/2}(h_1,h_2).
\]
Therefore  $\calF(f_s,p'g)
  =
  \chi(\det h_1)|\det h_1|^s
  \delta_{P'}^{1/2}(h_1,h_2)
  \calF(f_s,g)$.

Smoothness and right \(K\)-finiteness follow from the corresponding properties of
\(f_s\), and the local computations in the next section give convergence for
\(\Re(s)\gg0\) and meromorphic continuation. Hence \(\calF(f_s,\cdot)\) is a
meromorphic section of \(I_{n-3}(s)\).

Finally, assume \(f_s=\prod_v f_{s,v}\) and
\(\psi_\calO=\prod_v\psi_{\calO,v}\) are factorizable. With the product Haar measure
on \(U_{2,w}(\bbA)\), Fubini's theorem gives the Euler factorization
\eqref{int407} in the initial domain of absolute convergence. The identity then
extends meromorphically in \(s\).
\end{proof}

\section{Local computations}
In this section, let $\calO_v$ be the ring of integers of $F_v$, and $\varpi$ a choice of uniformizer with $q^{-1} = |\varpi|_v$. We now compute the different cases.

\subsection{The unramified case}\label{sec501}

In this case, we assume that the local holomorphic section $f_{s,v}$ is spherical in 
\begin{equation}
  I_{n,v}(s) = \Ind_{P_k(F_v)}^{\Sp_{2n}(F_v)}\chi_v|\det|^s\otimes \id_{\Sp_{2(n-k)}}.
\end{equation}
Without loss of generality, we assume that the local additive character $\psi_v$ is normalized so that it is trivial on $\calO_v$ but not on $\varpi^{-1}\calO_v$. 

\medskip
Consider the local integral. Conjugating $w$ to the right, performing the change of variable $g\mapsto wgw^{-1}$, and using the right $K_v=\Sp_{2n}(\calO_v)$-invariance of the spherical section $f_{s,v}$, we obtain
\begin{equation}\label{int501}
  \begin{split}
    \calF_v(f_{s,v},g) &= \int\limits_{U_{2,w}(F_v)}f_{s,v}(w u\iota_{c}(g))\psi_{\calO,v}(u)\,du\\
    &=\int\limits_{\overline{U}_{2,w}(F_v)}f_{s,v}(u\iota(g))\psi^{w^{-1}}_{\calO,v}(u)\,du, \quad g\in \Sp_{2(n-3)}(F_v),
  \end{split}
\end{equation}
where $\overline{U}_{2,w}=wU_{2,w}w^{-1}$. In matrix form,
\begin{equation}\label{def501}
  \overline{U}_{2,w} = w U_{2,w}w^{-1} 
  = \left.\left\{ \begin{pmatrix} 
    I_{k-2}&&&&\\
    &I_2&&&\\
    0&x&I_{2(n-k)}&&\\
    y&z&x^\ast&I_2&\\
    0&y^\ast&0&&I_{k-2}
  \end{pmatrix} \right\vert x = \begin{pmatrix} {}^tx_4 \\ {}^tx_3 \\ {}^tx_2 \\ {}^tx_1 \end{pmatrix} J_2 \right\},
\end{equation}
where $x_2,x_3\in \Mat_{2\times (n-k-1)}$, and $x_1,x_4\in \Mat_{2\times 1}$. The character $\psi_{\calO,v}^{w^{-1}}$ is defined by
$\psi_{\calO,v}^{w^{-1}} (u) = \psi_{\calO,v}(w^{-1}uw).$ We parametrize elements in (\ref{def501}) by $\overline{u(x,y,z)}$.

\begin{theorem}\label{thm501}
  Let $f_{s,v}$ be a spherical local holomorphic section in $I_{n,v}(s)$. The local Fourier coefficient
  \begin{equation*}
    \calF_v(f_{s,v},g) = \int\limits_{\overline{U}_{2,w}(F_v)}f_{s,v}(u\iota(g))\psi^{w^{-1}}_{\calO,v}(u)\,du
  \end{equation*}
  defines a local holomorphic section in 
  \begin{equation*}
    I_{n-3,v}(s) = \Ind_{P_{k-2,2(n-3)}(F_v)}^{\Sp_{2(n-3)}(F_v)} \chi_v|\det|^{s}\otimes \id_{\Sp_{2(n-k-1)}}.
  \end{equation*}
  In fact, we have $
    \calF_v(f^{\circ}_{n,s,v},g) $
  \begin{equation*}=
    L_v^{-1}(\chi_v^2,2s+k-1)\,
    L_v^{-1}\!\left(\chi_v,s-\frac{k+1}{2}+n\right)\,
    L_v^{-1}\!\left(\chi_v,s-\frac{k-1}{2}+n\right)\,
    f^{\circ}_{n-3,s,v}(g).
  \end{equation*}
  where $f^\circ$ is the spherical vector in the representation space.
\end{theorem}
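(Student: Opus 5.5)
We first reduce to computing a single value of the integral. By Lemma~\ref{lem:global-section}, applied locally, $\calF_v(f_{s,v},\cdot)$ transforms on the left under $P_{k-2,2(n-3)}(F_v)$ by $\chi_v|\det|^s\delta_{P'}^{1/2}$. Since $f^\circ_{n,s,v}$ is right $K_v$-invariant and $\iota(K'_v)\subset K_v$ with $K'_v=\Sp_{2(n-3)}(\calO_v)$, the function $\calF_v(f^\circ_{n,s,v},\cdot)$ is right $K'_v$-invariant. Here we also use that $\iota(K'_v)$ normalizes $\overline{U}_{2,w}(F_v)$, preserves $\psi^{w^{-1}}_{\calO,v}$, and has Jacobian one. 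By the Iwasawa decomposition it is therefore a multiple $c_v(s)f^\circ_{n-3,s,v}$, and it suffices to compute $c_v(s)=\calF_v(f^\circ_{n,s,v},e)$. Holomorphy of the section is then clear once $c_v(s)$ is shown to be a product of inverse $L$-factors, which are polynomials in $q^{-s}$.

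To compute the constant we split $\overline{u(x,y,z)}$ into its three coordinate blocks and integrate in stages.

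\textbf{The $z$-integration.} The variable $z\in\Mat_{2\times 2}$ (subject to the symplectic symmetry) sits in the $\GL_2\ltimes$ Siegel-type corner spanned by $e_{k-1},e_k$. We expect $\psi^{w^{-1}}_{\calO,v}$ to be nontrivial on $z$ only through the entries coming from $\tau_1,\tau_2$. Writing $\overline{u}$ through the Iwasawa decomposition of the lower-triangular $z$-part, $f^\circ$ picks up $\chi_v|\cdot|^{s+\ast}$ of the relevant torus entries. The resulting integral is a rank-one Gindikin--Karpelevich type integral twisted by a character. We expect this to produce $L_v^{-1}(\chi_v^2,2s+k-1)$ through the long root $2e_{k}$ (or $e_{k-1}+e_k$), the factor $\chi_v^2$ coming from the two copies.

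\textbf{The $y$-integration.} We use root exchange to show that $\psi$ is trivial on the $y$-block, so that it ``pairs'' with the already integrated $z$-variables. The lower-left $y\in\Mat_{2\times(k-2)}$ corresponds to roots $-(e_i+e_j)$ with $i\le k-2<j\in\{k-1,k\}$. Both indices lie in the $\GL_k$ block, and after conjugation these roots lie in $U_k^-$ only through $e_i+e_j$ with both indices at most $k$. We must check that $y$ can be moved into $P_k$ after the $x$-integration, by exchanging $y$ against the $U_2$-coordinates already quotiented. The hope is that the $y$-integral contributes trivially, i.e.\ volume one, in the unramified case.

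\textbf{The $x$-integration.} The variable $x$ consists of roots $-(e_j\mp e_l)$ with $j\in\{k-1,k\}$ and $l\in\{k+1,\dots,n\}$, together with their long counterparts. These are exactly the roots linking the $\GL_k$ block to $\Sp_{2(n-k)}$, and the character is trivial on them except for the coordinate from $\tau_1$ involving $e_{k+1}$ (the image of $e_3$). Integrating $f^\circ$ over this rank-$(2(n-k))$ unipotent piece, with the twist confined to one coordinate per row, is again a Gindikin--Karpelevich computation for two rows. Each row gives the ratio $\zeta$-type factors that telescope down to one inverse $L$-factor. The exponents $s+\frac{k-1}{2}$ and $s+\frac{k-3}{2}$ of $a_k,a_{k-1}$ in $\Lambda_s$, shifted by $n-k$, give $L_v^{-1}(\chi_v,s+n-\tfrac{k+1}{2})$ and $L_v^{-1}(\chi_v,s+n-\tfrac{k-1}{2})$.

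For each of these twisted integrals we would use the standard device of decomposing into the region where the character's argument is integral and its complement. Alternatively we would invoke Casselman--Shalika for the twisted $\GL_2$-type pieces.

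The main obstacle is the bookkeeping in the $x$- and $y$-stages: choosing the order of integration and the root exchanges so that each stage is a genuine rank-one twisted integral, and tracking the half-sum shifts so that the final exponents match exactly. We would first verify the whole computation in the smallest case $k=3$, $n=4$ to fix the signs and shifts.
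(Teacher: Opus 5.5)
Your reduction to the single constant $\calF_v(f^\circ_{n,s,v},e)$ is fine. Your guess that the $y$-block contributes volume one is also right, but the exchange is against $x$, not $z$. Conjugating by $\nPlusKone(r)\in U_k(\calO_v)$ shifts $x_{1,1}$ and $x_{1,2}$ by $\sum_i y_{2,i}r_i$ and $\sum_i y_{1,i}r_i$. So the partial integral vanishes unless $y$ is integral, and this must be done while $x_{1,1}$ is still being integrated.

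The genuine gap is in your $z$- and $x$-stages. The character is $\psi^{w^{-1}}_{\calO,v}(u)=\psi_v(x_{1,1}+x_{2(n-k),2})$. Both $\tau_1$ and $\tau_2$ land in $x$, one coordinate in each column, and $\psi$ is trivial on $z$. A stand-alone $z$-integration is therefore untwisted. At best it is of Gindikin--Karpelevich type and gives ratios such as $L_v(\chi_v^2,2s+k-2)/L_v(\chi_v^2,2s+k-1)$, not the polynomial $L_v^{-1}(\chi_v^2,2s+k-1)$. The long root $2e_k$ pairs with $\chi_v$ only once, so it cannot give $\chi_v^2$ at all. Likewise, Gindikin--Karpelevich along a row of $x$ telescopes to a ratio of two $L$-factors, never to a single inverse $L$-factor. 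Polynomial factors come only from genuinely twisted, Whittaker-type integrals. The stages also do not decouple: $z_1$ (root $-e_{k-1}-e_k$) and the two character-bearing coordinates form a Heisenberg group with central coordinate $z_1+x_{1,1}x_{2(n-k),2}$. So your rank-one or $\GL_2$ bookkeeping cannot be carried out as stated.

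What is missing is the paper's reorganization into three steps.
\begin{enumerate}
\item Let $\mathcal R$ consist of the second column of $x$ except $x_{2(n-k),2}$, together with $-2e_k$; $\psi$ is trivial on these roots. They form exactly the inversion set of $w_0=s_{k+1}\cdots s_n\cdots s_{k+1}s_k$. That part of the integral is $M(w_0,s)$, and Gindikin--Karpelevich gives $\bigl(1-\chi_v(\varpi)q^{-(s+n-\frac{k-1}{2})}\bigr)/\bigl(1-\chi_v(\varpi)q^{-(s+\frac{k-1}{2}+k-n+1)}\bigr)$.
\item A second root exchange, using $\nHeis(r,m;t)$, removes $x_{2,1},\dots,x_{2(n-k),1}$ and the $-2e_{k-1}$-coordinate $c_2$.
\item The surviving coordinates $x_{1,1},x_{2(n-k),2},z_1$ form the maximal unipotent of $t(\GL_3)$, carrying the Gelfand--Graev character. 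What remains is the Jacquet integral of the spherical vector of $\Ind_{B_{\GL_3}}^{\GL_3}(\chi_v|\cdot|^{s+\frac{k-3}{2}}\otimes|\cdot|^{k-n}\otimes\chi_v^{-1}|\cdot|^{-s-\frac{k-1}{2}})$.
\end{enumerate}
Casselman--Shalika then gives three factors. The $(1,3)$-factor is $L_v^{-1}(\chi_v^2,2s+k-1)$, and the $(1,2)$-factor is $L_v^{-1}(\chi_v,s+n-\frac{k+1}{2})$. The $(2,3)$-factor cancels the Gindikin--Karpelevich denominator.

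This also corrects your exponent matching. The two $\chi_v$-factors come from the exponents of $a_{k-1}$ and $a_k$ shifted by $n-k+1$, not $n-k$. A shift by $n-k$ applied to $a_{k-1}$ would give $s+n-\frac{k+3}{2}$.
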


\begin{proof}
It is clear that $\calF_v(f_{s,v},g)$ lies in $I_{n-3,v}(s)$ by verifying the formula in the local context. Thus it remains to prove that $\calF_v(f_{s,v},g)$ is holomorphic whenever $f_{s,v}$ is. For this purpose, we may assume $g=\id$.

\medskip
\noindent

Let $x=(x_{i,j})\in \Mat_{2(n-k)\times 2}(F_v)$ and embed $x$ into $\overline{U}_{2,w}$ via (\ref{def501}); denote the image by $\overline{u(x,0,0)}$. Equivalently, it corresponds to the roots
$\{-e_i\pm e_j \mid k-1\le i\le k<j\le n\}.$
Then, for any $u\in \overline{U}_{2,w}(F_v)$, we have $\psi_{\calO,v}^{w^{-1}}(u)=\psi_v(x_{1,1}+x_{2(n-k),2})$.
Similarly, write
\begin{equation}\label{iota2}
  y = \begin{pmatrix}
    y_1\\
    y_2
  \end{pmatrix}\in\Mat_{2\times (k-2)}(F_v), \quad
  y_i = \begin{pmatrix}
    y_{i,1}&y_{i,2}&\cdots&y_{i,(k-2)}
  \end{pmatrix},\; i =1,2,
\end{equation}
and denote its embedding in $\overline{U}_{2,w}$ by $\overline{u(0,y,0)}$.

Since $f_{s,v}$ is spherical and locally constant, for any $y$ such that
$y_{i,j}\in \calO_v$ for $i=1,2$ and $j=1,\dots,k-2$, we have
\begin{equation}\label{inv01}
  f_{s,v}(\overline{u(0,y,0)}) = f_{s,v}(\id).
\end{equation}

Now consider any $(k-2)$-tuple $r=(r_1,\dots,r_{k-2})\in (\calO_v)^{k-2}$ and define
\begin{equation}\label{def:npluskone-rootonly}
  \nPlusKone(r)
  \;:=\;
  \prod_{i=1}^{k-2} X_{e_i+e_{k+1}}(r_i)\ \in \Sp_{2n}(F_v),
\end{equation}
where the product is taken in any fixed order.
Note that $f_{s,v}$ is left-invariant under $\nPlusKone(r)$ since $\nPlusKone(r)\in U_k(F_v)$. Moreover,
\begin{equation}\label{r01}
  \nPlusKone(r)\begin{pmatrix} 
    I_{k-2}&&&&\\
    &I_2&&&\\
    0&x&I_{2(n-k)}&&\\
    y&z&x^\ast&I_2&\\
    0&y^\ast&0&&I_{k-2}\end{pmatrix}(\nPlusKone(r))^{-1} = \begin{pmatrix} 
    I_{k-2}&b&&&\\
    &I_2&&&\\
    0&x'&I_{2(n-k)}&&\\
    y&z&(x')^\ast&I_2&b^\ast\\
    0&y^\ast&0&&I_{k-2}\end{pmatrix},
\end{equation}
where $x$ is viewed in the form $\Mat_{2(n-k)\times 2}$ and
\begin{equation}
  x' = \begin{pmatrix}
    x_{1,1}+\sum_{i=1}^{k-2}y_{2,i}r_i&x_{1,2} +\sum_{i=1}^{k-2} y_{1,i}r_i\\
    \vdots&\vdots\\
    x_{2(n-k),1}&x_{2(n-k),2}
  \end{pmatrix}.
\end{equation}
Before integrating over the \(y\)-coordinates, write \(y=(y_1,y_2)\) and define
the partial integral
\[
G(y_1,y_2)
=
\int f_{s,v}(u(x,y,z))
 \psi_v(x_{1,1}+x_{2(n-k),2})\,dx\,dz ,
\]
where the integration is over all remaining \(x\)- and \(z\)-coordinates in
\(\overline U_{2,w}(F_v)\). Thus
\[
\calF_v(f_{s,v},\id)=\int G(y_1,y_2)\,dy_1\,dy_2 .
\]

On the right-hand side of \eqref{r01}, the entries \(b\) and \(b^\ast\) lie in the
unipotent radical of the \(\GL_k\)-Levi block inside \(P_k(F_v)\). Hence their
left action on the section is trivial by \(P_k\)-equivariance. Conjugating
\(\nPlusKone(r)\) to the right, using right \(K_v\)-invariance of the spherical
section, and then making the changes of variables
\[
x_{1,1}\mapsto x_{1,1}-\sum_{i=1}^{k-2}y_{2,i}r_i,\qquad
x_{1,2}\mapsto x_{1,2}-\sum_{i=1}^{k-2}y_{1,i}r_i,
\]
we obtain, for every \(r=(r_1,\ldots,r_{k-2})\in\calO_v^{k-2}\),
\[
G(y_1,y_2)
=
\psi_v\!\left(-\sum_{i=1}^{k-2}y_{2,i}r_i\right)G(y_1,y_2).
\]
Since \(\psi_v\) has conductor \(\calO_v\), this identity forces
$G(y_1,y_2)=0 $ unless $y_2\in\calO_v^{k-2}$.

Indeed, if some \(y_{2,i}\notin\calO_v\), one can choose \(r_i\in\calO_v\) so that
\(\psi_v(-y_{2,i}r_i)\neq 1\). On the remaining set
\(y_2\in\calO_v^{k-2}\), the corresponding unipotent element belongs to
\(K_v\). By \eqref{inv01}, the right-invariance by compact subgroup, and normalization of Haar measure, the integration
over \(y_2\) contributes volume \(1\) and may be omitted.

Similarly, considering
\begin{equation}\label{r02}
  \begin{pmatrix}
    I_{k-2}&0&^tr&&&&\\
    0&I_2&0&&&&\\
    0&0&1&&&&\\
    &&&I_{2(n-k-1)}&&&\\
    &&&&1&0&r^\ast\\
    &&&&0&I_2&0\\
    &&&&0&0&I_{k-2}
  \end{pmatrix}
\end{equation}
and repeating the same argument, we can drop the integration over $y_1$ as well. Consequently, (\ref{int501}) reduces to
\begin{equation}\label{int502}
  \calF_v(f_{s,v},\id) = \int\limits_{\overline{U}^{(0)}_{2,w}(F_v)} f_{s,v}(u)\psi^{w^{-1}}_{\calO,v}(u)\,du,
\end{equation}
where $\overline{U}^{(0)}_{2,w}$ has elements $\overline{u(x,0,z)}\in \overline{U}_{2,w}$. Define
\[
\overline U'_{2,w}
=
\left\{
u(x,0,z)\;\middle|\;
x=
\begin{pmatrix}
0&x_{1,2}\\
\vdots&\vdots\\
0&x_{2(n-k),2}
\end{pmatrix},
\quad
z=
\begin{pmatrix}
0&z_3\\
0&0
\end{pmatrix}
\right\},
\]
and
\[
\overline U''_{2,w}
=
\left\{
u(x,0,z)\;\middle|\;
x=
\begin{pmatrix}
x_{1,1}&0\\
\vdots&\vdots\\
x_{2(n-k),1}&0
\end{pmatrix},
\quad
z=
\begin{pmatrix}
z_1&0\\
z_2&z_1
\end{pmatrix}
\right\}.
\]
Then $\overline{U}^{(0)}_{2,w}\simeq \overline{U}'_{2,w}\ltimes \overline{U}''_{2,w}$, and hence
\begin{equation}\label{int503}
  \calF_v(f_{s,v},\id)
  = \int\limits_{\overline{U}''_{2,w}(F_v)}\int\limits_{\overline{U}'_{2,w}(F_v)}
  f_{s,v}(u'u'')\psi^{w^{-1}}_{\calO,v}(u'u'')\,du'\,du''.
\end{equation}

For $u\in \overline{U}'_{2,w}(F_v)$, write $u=\overline{u(X',0,Z')}$ where \[X'=\begin{pmatrix}
    0&r_1\\
    0&r_2\\
    \vdots&\vdots\\
    0&r_{2(n-k)}
  \end{pmatrix},
Z'=\begin{pmatrix}
    0&m+\displaystyle\sum_{i=1}^{n-k}r_ir_{2(n-k)+1-i}\\0&0
  \end{pmatrix}.\]  
Thus $u$ can now be parametrized by $u_{(r_1,r_2,\cdots,r_{2(n-k)};m)}$.

Consider the following set of roots
\[
\mathcal R
=
\{ -e_k+e_j: k+1\le j\le n\}
\cup
\{-2e_k\}
\cup
\{ -e_k-e_j: k+2\le j\le n\}.
\]
Let $N_{\mathcal R}(F_v):=\prod_{\alpha\in\mathcal R}X_\alpha(F_v)$, where the product is taken in the ordering compatible with the coordinates $(r_1,\ldots,r_{2(n-k)-1})$ appearing in
$u_{(r_1,r_2,\ldots,r_{2(n-k)-1},0;m)}$. Hence we may write
\[
u_{(r_1,r_2,\ldots,r_{2(n-k)-1},0;m)}
=
n_{\mathcal R}u',
\qquad
n_{\mathcal R}\in N_{\mathcal R}(F_v),
\quad
u'\in X_{-e_k-e_{k+1}}(F_v).
\]

Define
\[
w_0
=
s_{k+1}s_{k+2}\cdots s_{n-1}s_n s_{n-1}\cdots s_{k+1}s_k.
\]
Therefore $w_0$ takes $k$ to $-(k+1)$, and takes $k+1$ to $k$ and so
$w_0(\mathcal R)\subset \Sigma^+$. On the other hand, $w_0(-e_k-e_{k+1})=e_{k+1}-e_k\in\Sigma^-$. Thus \(w_0\) sends precisely the root subgroups in
\(N_{\mathcal R}\) to positive root subgroups, while the remaining subgroup
\(X_{-e_k-e_{k+1}}\) is not included in this intertwining step. In this common region of absolute convergence, we may
separate the \(N_{\mathcal R}(F_v)\)-coordinates by Fubini's theorem. Moreover,
\[
\psi^{w^{-1}}_{\calO,v}(n_{\mathcal R}u'u'')
=
\psi^{w^{-1}}_{\calO,v}(u'u'')
\]
for \(n_{\mathcal R}\in N_{\mathcal R}(F_v)\),
\(u'\in X_{-e_k-e_{k+1}}(F_v)\), and
\(u''\in \overline U''_{2,w}(F_v)\). Hence \((\ref{int503})\) becomes
\begin{equation}\label{int504}
  \int\limits_{\overline{U}''_{2,w}(F_v)}
  \int\limits_{X_{-e_k-e_{k+1}}(F_v)}
  \int\limits_{N_{\mathcal R}(F_v)}
  f_{s,v}(n_{\mathcal R}u'u'')\,
  \psi^{w^{-1}}_{\calO,v}(u'u'')\,
  dn_{\mathcal R}\,du'\,du''.
\end{equation}

Since
$\mathcal R=\{\alpha\in\Sigma^-:w_0\alpha\in\Sigma^+\}$, the inner integration over \(N_{\mathcal R}(F_v)\) is precisely the standard
local intertwining integral attached to the single Weyl element \(w_0\).
Thus it defines $M(w_0,s): I_n(\Lambda_{s,v})
\longrightarrow I_n(\Lambda_{s,v}^{w_0})$. Hence the identity obtained in the above region of
absolute convergence extends meromorphically.

We now check that the standard intertwining operator is defined and absolutely
convergent in a right half-plane. For a positive root \(\beta\) satisfying
\(w_0^{-1}\beta\in\Sigma^-\), the corresponding rank-one integral is absolutely
convergent provided
$\left|\Lambda_{s,v}(\beta^\vee(\varpi))\right|<1$. Write $a=s+\frac{k-1}{2}$.
The positive roots \(\beta\) satisfying \(w_0^{-1}\beta\in\Sigma^-\) are
\[
\{e_k-e_j:k+1\le j\le n\}
\cup
\{2e_k\}
\cup
\{e_k+e_j:k+2\le j\le n\}.
\]
For these roots, using the definition of \(\Lambda_{s,v}\), we have
\[
\Lambda_{s,v}((e_k-e_j)^\vee(\varpi))
=
\chi_v(\varpi)q^{-(a+n-j+1)}
\qquad(k+1\le j\le n),
\]
\[
\Lambda_{s,v}((2e_k)^\vee(\varpi))
=
\chi_v(\varpi)q^{-a},
\]
and $\Lambda_{s,v}((e_k+e_j)^\vee(\varpi))
=
\chi_v(\varpi)q^{-(a+j-n-1)}
(k+2\le j\le n)$.
Since \(\chi_v\) is unitary, the convergence conditions are $\Re(a+n-j+1)>0\text{ where }(k+1\le j\le n)$,
$\Re(a)>0$, and $\Re(a+j-n-1)>0\text{ where }(k+2\le j\le n)$. The strongest of these inequalities is $\Re(a+k-n+1)>0$.

Therefore, after enlarging the right half-plane if necessary, the standard
intertwining integral defining \(M(w_0,s)\) is absolutely convergent. The
identity obtained in this region then extends meromorphically in \(s\) by the
standard meromorphic continuation of local intertwining operators.

Let \(f^{w_0}_{s,v}\) be the normalized spherical vector in
\(I_n(\Lambda_{s,v}^{w_0})\). By the Gindikin--Karpelevich formula for the
single Weyl element \(w_0\), we have $M(w_0,s)f_{s,v}
=
c(w_0,s)f^{w_0}_{s,v}$, where
\[
c(w_0,s)
=
\prod_{\substack{\beta\in\Sigma^+\\ w_0^{-1}\beta\in\Sigma^-}}
\frac{
1-q^{-1}\Lambda_{s,v}(\beta^\vee(\varpi))
}{
1-\Lambda_{s,v}(\beta^\vee(\varpi))
}.
\]
In the present case, the positive roots \(\beta\) satisfying
\(w_0^{-1}\beta\in\Sigma^-\) are
\[
\{e_k-e_j:k+1\le j\le n\}
\cup
\{2e_k\}
\cup
\{e_k+e_j:k+2\le j\le n\}.
\]
Equivalently, the negatives of these roots are precisely the roots in
\(\mathcal R\). Evaluating the inducing character \(\Lambda_{s,v}\) on the
corresponding coroots gives
\[
c(w_0,s)
=
\frac{
1-\chi_v(\varpi)q^{-\left(s+\frac{k-1}{2}-k+n+1\right)}
}{
1-\chi_v(\varpi)q^{-\left(s+\frac{k-1}{2}+k-n+1\right)}
}.
\]
Therefore \((\ref{int504})\) equals
\begin{equation}\label{int507}
  \frac{
  1-\chi_v(\varpi)q^{-\left(s+\frac{k-1}{2}-k+n+1\right)}
  }{
  1-\chi_v(\varpi)q^{-\left(s+\frac{k-1}{2}+k-n+1\right)}
  }
  \int\limits_{\overline{U}^{+}_{2,w}(F_v)}
  f^{w_0}_{s,v}(w_0u)\,
  \psi^{w^{-1}}_{\calO,v}(u)\,du,
\end{equation}
where
\begin{equation}\label{uz}
  \overline{U}^{+}_{2,w} = \left\{\left.\begin{pmatrix} 
    I_{k-2}&&&&\\
    &I_2&&&\\
    0&x&I_{2(n-k)}&&\\
    0&z&x^\ast&I_2&\\
    0&0&0&&I_{k-2}
  \end{pmatrix}\right\vert
    x = \begin{pmatrix}
      x_{1,1}&0\\
      \vdots&\vdots\\
      x_{2(n-k)-1,1}&0\\
      x_{2(n-k),1}&x_{2(n-k),2}
    \end{pmatrix},\;
    z= \begin{pmatrix}
      z_1'&0\\z_2&z_1
    \end{pmatrix}
  \right\},
\end{equation}
where $z_1'$ is $z_1+x_{1,1}x_{2(n-k),2}$.

We parameterize any $u\in \overline{U}^{+}_{2,w}$ by
$u=u_{(x_{2(n-k),2};x_{1,1},\cdots,x_{2(n-k),1};c_1,c_2)}$ via $\overline{u(X',0,Z')}$ where
\[
X'=\begin{pmatrix}
    x_{1,1}&0\\
    \vdots&\vdots\\
    x_{2(n-k)-1,1}&0\\
    x_{2(n-k),1}&x_{2(n-k),2}
  \end{pmatrix},\quad
Z'=\begin{pmatrix}
    c_1+x_{1,1}x_{2(n-k),2}&0\\
    c_2 + \displaystyle\sum_{i=1}^{n-k}x_{i,1} x_{2(n-k)+1-i,1}&c_1+x_{1,1}x_{2(n-k),2}
  \end{pmatrix}.
\]
As before, if $x_{i,1},c_2\in \calO_v$ for all $i=2,\dots,2(n-k)$, then $f_{s,v}^{w_0}(w_0u)=f_{s,v}^{w_0}(w_0)$, for all $u=u_{(0;0,x_{2,1},\dots,x_{2(n-k),1};0,c_2)}$.

Let $(r,m;t):=(r_2,\dots,r_{2(n-k)-1};\,m;\,t)\in (\calO_v)^{2(n-k)}$.
For $j=k+2,\dots,n$ set
$a_j:=r_{j-k}\text{ and } b_j:=r_{\,2n-j-k+1}$.
Define
\begin{equation}\label{def:nHeis-root}
\begin{aligned}
\nHeis(r,m;t)
:=\;&
X_{e_{k-1}-e_k}(t)\,
\Bigl(\prod_{j=k+2}^{n} X_{e_{k-1}-e_j}(a_j)\Bigr)\,
X_{2e_{k-1}}(m)\\
&\times
\Bigl(\prod_{j=k+2}^{n} X_{e_{k-1}+e_j}(b_j)\Bigr)\,
X_{e_{k-1}+e_k}(t)
\ \in \Sp_{2n}(F_v).
\end{aligned}
\end{equation}
With this ordering, the long-root coordinate in the central factor becomes
\[
m_r \;=\; m+\sum_{j=k+2}^{n} a_j b_j +t^2
\;=\; m+\sum_{i=2}^{n-k} r_i\,r_{2(n-k)+1-i}+t^2,
\]
by the commutator relations for the pairs
$\bigl(e_{k-1}-e_j,\;e_{k-1}+e_j\bigr)$.
Clearly $f^{w_0}_{s,v}$ is left-invariant under $\nHeis(r,m;t)$.
Conjugating by $w_0$ to the right, we have
\begin{equation}
  \nHeis(r,m;t)^{w_0} := w_0\nHeis(r,m;t)w_0^{-1} = \begin{pmatrix}
    I_{k-2}&&&&&&&&\\
    &1&&&&t&&&\\
    &&1&0&0&0&0&&\\
    &&&1&r&m_r&0&t&\\
    &&&&I_{2(n-k-1)}&r^\ast&0&&\\
    &&&&&1&0&&\\
    &&&&&&1&&\\
    &&&&&&&1&\\
    &&&&&&&&I_{k-2}\\
  \end{pmatrix}.
\end{equation}

Conjugating $u$ by $\nHeis(r,m;t)^{w_0}$ gives
\begin{equation}\label{X'}
  \begin{split}
    u^{\left(\nHeis(r,m;t)^{w_0}\right)}
    = \begin{pmatrix} 
    I_{k-2}&&&&\\
    &I_2&&&\\
    0&X'&I_{2(n-k)}&&\\
    0&Z'&X'^\ast&I_2&\\
    0&0&0&&I_{k-2}\end{pmatrix},\text{ where }\\
    X'=\begin{pmatrix}
      x_{1,1} + \sum\limits_{i=2}^{2(n-k)-1}x_{i,1}r_i + x_{2(n-k),1}m+ c_2t&x_{2(n-k),2}m\\
      x_{2,1}&x_{2(n-k),2}r_{2(n-k)-1}\\
      \vdots & \vdots\\
      x_{2(n-k)-1,1}&x_{2(n-k),2}r_2\\
      x_{2(n-k),1}&x_{2(n-k),2}
    \end{pmatrix}.
  \end{split}
\end{equation}
We do not list the entries of \(Z'\), since the character
\(\psi^{w^{-1}}_{\calO,v}\) does not depend on them. The entries in the second column of \(X'\), except for the last entry
\(x_{2(n-k),2}\), correspond to root subgroups whose roots lie in
\(\mathcal R\). Since \(w_0(\mathcal R)\subset \Sigma^+\), these root
subgroups are conjugated by \(w_0\) into the positive unipotent subgroup.
Hence their contribution appears on the left of \(w_0\), where it is killed by the left unipotent invariance of the induced section. Thus we may replace $X'$ in (\ref{X'}) by
\[
X'=
\begin{pmatrix}
x_{1,1} + \sum\limits_{i=2}^{2(n-k)-1}x_{i,1}r_i + x_{2(n-k),1}m+ c_2t&0\\
x_{2,1}&0\\
\vdots&\vdots\\
x_{2(n-k)-1,1}&0\\
x_{2(n-k),1}&x_{2(n-k),2}
\end{pmatrix},
\]
so that $u^{\nHeis(r,m;t)^{w_0}}\in \overline{U}^{+}_{2,w}(F_v)$ and $f_{s,v}^{w_0}(\nHeis(r,m;t)w_0u) = f_{s,v}^{w_0}(w_0u^{(\nHeis(r,m;t)^{w_0})})$.

We now apply the same character-orthogonality argument before integrating
over the coordinates which will be deleted. Put $\mathbf x=(x_{2,1},\ldots,x_{2(n-k),1},c_2)$ and define the partial integral
\[
H(\mathbf x)=
\int
f^{w_0}_{s,v}(w_0u)\psi^{w^{-1}}_{\calO,v}(u)\,d\mu_{\mathrm{rem}},
\]
where the integration is over all coordinates of \(\overline U^+_{2,w}(F_v)\)
except \(x_{2,1},\ldots,x_{2(n-k),1}\) and \(c_2\). Thus
$\int_{\overline U^+_{2,w}(F_v)}
f^{w_0}_{s,v}(w_0u)\psi^{w^{-1}}_{\calO,v}(u)\,du
=
\int H(\mathbf x)\,d\mathbf x$.

By the preceding conjugation formula, for
\(r_i,m,t\in\calO_v\), conjugation by \(\nHeis(r,m;t)^{w_0}\), followed by
right \(K_v\)-invariance of the spherical section, changes the
character-relevant coordinate by $x_{1,1}
\mapsto
x_{1,1}
+\sum_{i=2}^{2(n-k)-1}x_{i,1}r_i
+x_{2(n-k),1}m
+c_2t$. Equivalently, by change of variables $x_{1,1}\mapsto
x_{1,1}
-\left(
\sum_{i=2}^{2(n-k)-1}x_{i,1}r_i
+x_{2(n-k),1}m
+c_2t
\right)$, we obtain
\[
H(\mathbf x)
=
\psi_v\!\left(
-\sum_{i=2}^{2(n-k)-1}x_{i,1}r_i
-x_{2(n-k),1}m
-c_2t
\right)H(\mathbf x).
\]
Since \(r_i,m,t\) range independently over \(\calO_v\), and since \(\psi_v\)
has conductor \(\calO_v\), this forces $H(\mathbf x)=0$
unless $x_{i,1}\in\calO_v$ where $2\le i\le 2(n-k), c_2\in\calO_v$. 

On this compact set, the corresponding unipotent elements lie in \(K_v\).
By right \(K_v\)-invariance of the spherical section and our normalization
\(\operatorname{vol}(\calO_v)=1\), the integrations over these coordinates may
be omitted. Therefore
\begin{equation}\label{int508}
  \calF_v(f_{s,v},\id) =
  \frac{1-\chi_v(\varpi)q^{-(s+\frac{k-1}{2}-k+n+1)}}{1-\chi_v(\varpi)q^{-(s+\frac{k-1}{2}+k-n+1)}}
  \int\limits_{\overline{U}^{-}_{2,w}(F_v)}
  f^{w_0}_{s,v}(w_0u) \psi^{w^{-1}}_{\calO,v}(u)\,du .
\end{equation}
where
\begin{equation}\label{def502}
  \overline{U}^{-}_{2,w} = \left\{\left.\begin{pmatrix} 
    I_{k-2}&&&&\\
    &I_2&&&\\
    0&x&I_{2(n-k)}&&\\
    0&z&x^\ast&I_2&\\
    0&0&0&&I_{k-2}
  \end{pmatrix}\right\vert
  x = \begin{pmatrix}
    x_{1,1}&0\\
    0&0\\
    \vdots&\vdots\\
    0&0\\
    0&x_{2(n-k),2}
  \end{pmatrix},\;
  z= \begin{pmatrix}
    z_1'&0\\0&z_1
  \end{pmatrix}
  \right\},
\end{equation}
where $z_1'=z_1+x_{1,1}x_{2(n-k),2}$.
\medskip
Embed \(h\in \GL_3(F_v)\) into \(\Sp_{2n}(F_v)\) via \(t(h)\), as defined in
\eqref{gl3em}, and denote by \(U_{\GL_3}\) the embedded image of the
unipotent radical of the standard Borel subgroup \(B_{\GL_3}\). We identify
\(\overline{U}^{-}_{2,w}(F_v)\) with \(U_{\GL_3}(F_v)\) as follows: for
\(u\in \overline{U}^{-}_{2,w}(F_v)\) in the form \eqref{def502}, we can write
\[
u = w_0^{-1} (w^k)^{-1} \,
t\!\left(\begin{pmatrix}
  1&x_{2(n-k),2}&z_1 + x_{1,1}x_{2(n-k),2}\\
  &1&x_{1,1}\\
  &&1
\end{pmatrix}\right)
\, w^kw_0,
\]
where
  $w^k=w_kw_{k-1}w_k
  =
  t\!\left(\begin{pmatrix}
    &&1\\
    &1&\\
    1&&
  \end{pmatrix}\right)$. Under this identification, $\psi^{w^{-1}}_{\calO,v}(u)=\psi_v(x_{1,1}+x_{2(n-k),2})$, which is the Gelfand--Graev character on \(U_{\GL_3}(F_v)\). Hence the
integral in \eqref{int508} is the unnormalized Jacquet integral attached to the
spherical section of the unramified principal series
\[
\Ind_{B_{\GL_3}(F_v)}^{\GL_3(F_v)}
\left(\chi_v|\cdot|^{s+\frac{k-3}{2}}\otimes |\cdot|^{k-n}\otimes
\chi_v^{-1}|\cdot|^{-s-\frac{k-1}{2}}\right).
\]

The Casselman--Shalika formula \cite{CS80} applies to this unnormalized
Jacquet integral and gives
\begin{equation}
  \begin{split}
    &\frac{1-\chi_v(\varpi)q^{-(s+\frac{k-1}{2}-k+n+1)}}{1-\chi_v(\varpi)q^{-(s+\frac{k-1}{2}+k-n+1)}}
    \int\limits_{U_{\GL_3}(F_v)}
    f^{w_0}_{s,v}\left(t\!\left(\begin{pmatrix}
      &&1\\
      &1&\\
      1&&
    \end{pmatrix}\right)uw^kw_0\right)
    \psi^{w^{-1}}_{\calO,v}(u)\,du\\
    &=
    \left(1-\chi_v^2(\varpi)q^{-(2s+k-1)}\right)
    \left(1-\chi_v(\varpi)q^{-(s+\frac{k-1}{2}-k+n)}\right)
    \left(1-\chi_v(\varpi)q^{-(s+\frac{k-1}{2}+k-n+1)}\right)\\
    &\quad\times
    \frac{1-\chi_v(\varpi)q^{-(s+\frac{k-1}{2}-k+n+1)}}{1-\chi_v(\varpi)q^{-(s+\frac{k-1}{2}+k-n+1)}}
    f_{s,v}(w^kw_0)\\
    &=
    \left(1-\chi_v^2(\varpi)q^{-(2s+k-1)}\right)
    \left(1-\chi_v(\varpi)q^{-(s-\frac{k+1}{2}+n)}\right)
    \left(1-\chi_v(\varpi)q^{-(s-\frac{k-1}{2}+n)}\right)
    f_{s,v}(\id).
  \end{split}
\end{equation}
In the last step we used that \(w^kw_0\in K_v\), hence the spherical section
satisfies \(f_{s,v}(w^kw_0)=f_{s,v}(\id)\). This proves the stated formula and
completes the proof.
\end{proof}

\subsection{The ramified and Archimedean cases} When $v$ is a ramified local place or an Archimedean place, it suffices for us to show that the local Fourier coefficient
\begin{equation}
  \calF_v(f_{s,v},g) = \int\limits_{\overline{U}_{2,w}(F_v)}f_{s,v}(u\iota(g))\psi^{w^{-1}}_{\calO,v}(u)\,du
\end{equation}
admits meromorphic continuation and defines a section. In other words, we show
\begin{proposition}\label{prop52}
  Let $f_{s,v}\in I_{n,v}(s)$ be a section that varies holomorphically in $s\in\bbC$. Then the local Fourier coefficient
  \begin{equation*}
    \calF_v(f_{s,v},g) = \int\limits_{\overline{U}_{2,w}(F_v)}f_{s,v}\bigl(u\iota(g)\bigr)\psi^{w^{-1}}_{\calO,v}(u)\,du
  \end{equation*}
  defines a local meromorphic section in 
  \begin{equation*}
    I_{n-3,v}(s) = \Ind_{P_{k-2,2(n-3)}(F_v)}^{\Sp_{2(n-3)}(F_v)} \chi_v|\det|^{s}\otimes \id_{\Sp_{2(n-k-1)}}.
  \end{equation*}
\end{proposition}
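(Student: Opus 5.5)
The plan is to split the statement into two parts: equivariance, which is formal, and convergence plus meromorphic continuation, which carries the content.

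\emph{Equivariance.} In the region of absolute convergence, the transformation law under $P_{k-2,2(n-3)}(F_v)$ follows from the local version of the argument in Lemma~\ref{lem:global-section}. For $p'=(h_1,h_2)v$, we conjugate $\iota(p')$ across $w$ into $P_k(F_v)$ and change variables on $\overline U_{2,w}(F_v)$. The only nontrivial Jacobian comes from the $y$-coordinates and equals $|\det h_1|_v^{-2}$. The same comparison of modulus characters as in that lemma then gives
\[
\calF_v(f_{s,v},p'g)=\chi_v(\det h_1)|\det h_1|_v^{s}\,\delta_{P'}^{1/2}(h_1,h_2)\,\calF_v(f_{s,v},g).
\]
Smoothness, together with $K$-finiteness at Archimedean places, is inherited from $f_{s,v}$ by right translation. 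Once continuation is established, the identity persists by uniqueness of meromorphic continuation.

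\emph{Convergence for $\Re(s)\gg0$.} I would write $f_{s,v}$ as a finite combination (by $K$-finiteness at Archimedean places; at finite places $f_{s,v}$ is a finite sum of translates) dominated by the spherical section. Absolute convergence of $\int_{\overline U_{2,w}}|f_{s,v}(u\iota(g))|\,du$ reduces to the unramified computation of Theorem~\ref{thm501} with the character removed. Concretely, $|f_{s,v}|\le C\,f^\circ_{\Re s}$, and the integral of $f^\circ$ over $\overline U_{2,w}$ is a product of intertwining-type integrals: the $y$-part, the $N_{\mathcal R}$-part and the $\GL_3$-unipotent part. Each is a Gindikin--Karpelevich-type integral, which converges once $\Re(s)$ exceeds an explicit bound. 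This uses the same rank-one estimates $|\Lambda_{s,v}(\beta^\vee)|<1$ already listed there, now for all roots in $\overline U_{2,w}$. This yields holomorphy in a right half-plane.

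\emph{Meromorphic continuation.} This is the main obstacle. I would follow the factorization from the proof of Theorem~\ref{thm501}, but without using sphericity to truncate variables:
\[
\calF_v(f_{s,v},g)=\int_{y}\int_{\overline U''}\int_{X_{-e_k-e_{k+1}}} M(w_0,s)f_{s,v}(\cdots)\,\psi\,d\cdots
\]
Here $M(w_0,s)$ continues meromorphically by the standard theory; for the Archimedean case I would cite \cite{BL24}. What remains is a Jacquet-type integral along $U_{\GL_3}$, twisted by the $x$-, $c_2$- and $y$-variables. The plan is to show that the integrations killed by root exchange in the unramified case are replaced by smooth compactly supported (at finite places) or Schwartz (at Archimedean places) cutoffs. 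This uses the root-exchange identity: the same conjugation by $\nPlusKone(r)$ and $\nHeis(r,m;t)$, now with $r$ ranging over all of $F_v$ and integrated against a bump function, which expresses the integral as one over a smaller unipotent group of a modified section. After that, the remaining integral is a Whittaker functional on $\GL_3$ applied to a holomorphic family in an induced representation. Jacquet integrals continue holomorphically (entire) in the inducing parameter: by Bernstein's continuation principle at finite places, and by Wallach's result in the smooth Fréchet setting at Archimedean places. The composition of the meromorphic operator $M(w_0,s)$ with this entire Whittaker functional, integrated against rapidly decaying functions of the auxiliary variables, gives the meromorphic continuation.

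At finite places I would alternatively argue via Bernstein's principle directly. For $s$ in a Zariski-dense set, $\calF_v$ is the unique, up to a rational factor, element of the relevant finite-dimensional space of $(U,\psi)$-equivariant functionals, so it is rational in $q^{-s}$.
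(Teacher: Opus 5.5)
\textbf{Verdict: correct, but by a different route.} Your equivariance argument, the reduction to $g=1$, and the first root exchange coincide with the paper's proof. That exchange uses $\nPlusKone(r)$ to absorb the $y$-coordinates, via compact-open lattices at finite places and Dixmier--Malliavin at Archimedean places.

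\textbf{Where the routes split.} The paper stops once the $y$-variables are absorbed. It declares the remaining integral over $\overline U^{(0)}_{2,w}$ a ``standard generalized Jacquet integral'' and appeals to Jacquet's work for convergence and continuation. You instead push the unramified factorization through for general sections:
\begin{enumerate}
\item you split off $N_{\mathcal R}$ as $M(w_0,s)$, which is legitimate because $\mathcal R=\{\alpha<0: w_0\alpha>0\}$ and $\psi^{w^{-1}}_{\calO,v}$ is trivial on $N_{\mathcal R}$;
\item you perform the second exchange with $\nHeis(r,m;t)$ against cutoffs;
\item you land on an honest $\GL_3$ Jacquet integral, which is entire in flat families.
\end{enumerate}

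\textbf{What each route buys.} Yours is more work, but it rests only on classical inputs: meromorphy of standard intertwining operators and entireness of $\GL_3$ Whittaker integrals. It also shows that every pole comes from $M(w_0,s)$. The paper's last step is shorter but is really an assertion. An integral over $\overline U^{(0)}_{2,w}$ with this degenerate character, on a degenerately induced representation, is not literally of the form treated in Jacquet's thesis.

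\textbf{Convergence.} Your argument via $|f_{s,v}|\le C f^\circ_{\Re s}$ and Gindikin--Karpelevich supplies the half-plane of convergence, which the paper only asserts. Your list of parts ($y$, $N_{\mathcal R}$, $\GL_3$) omits the first-column $x$-coordinates and $c_2$. This is harmless: the roots of $\overline U_{2,w}$ are exactly the negative roots made positive by a single Weyl element. Take $\lambda$ with $\lambda_{k-1}=-1$, $\lambda_k=-1.1$, $\lambda_j\in(\tfrac12,1)$ decreasing for $j\le k-2$, and $\lambda_l\in(0,\tfrac12)$ decreasing for $l>k$. Then the whole absolute integral is a product of rank-one factors.

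\textbf{Points that need care in your route.}
\begin{itemize}
\item The elements $\nHeis(r,m;t)$ generate a Heisenberg group: the commutator of $X_{e_{k-1}-e_j}$ and $X_{e_{k-1}+e_j}$ lies in $X_{2e_{k-1}}$. So the Fourier-transform (Dixmier--Malliavin) trick cannot be applied to the whole group at once; carry it out in abelian stages.
\item For non-spherical sections you must check that the $\mathcal R$-root factors produced by conjugation really can be moved to the left of $w_0$ and killed. The paper only needed this for individual elements of $K_v$ acting on a spherical vector.
\item Your Bernstein-principle alternative at finite places presupposes that the relevant space of $(\overline U_{2,w},\psi)$- and $\iota(P')$-equivariant functionals is one-dimensional for generic $s$. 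That requires an orbit analysis you have not supplied, so treat this alternative as optional.
\end{itemize}
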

Before the proof, we first recall a certain form of the Dixmier-Malliavin Lemma.

\begin{lemma}[The Dixmier-Malliavin Lemma, \cite{DM78}]
Let $G$ be a reductive group over an Archimedean field or a Lie group and $(\pi,V)$ a smooth representation of $G$.
Then the space $V^\infty$ of smooth vectors is generated by the Schwartz algebra $\mathcal S(G)$ under convolution: for any $v\in V^\infty$ there exist $f_i\in\mathcal S(G)$ and $v_i\in V^\infty$ such that $v=\sum_i \pi(f_i)v_i$.
\end{lemma}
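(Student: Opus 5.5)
The plan is to follow the original argument of Dixmier and Malliavin. We reduce to a one-dimensional analytic statement and then assemble it along a basis of the Lie algebra. Since $C_c^\infty(G)\subset\mathcal S(G)$, it suffices to produce $f_i\in C_c^\infty(G)$. In the non-Archimedean situation, which is not covered by the lemma but is the counterpart we will need, the statement is trivial: a smooth $v$ is fixed by some open compact $K'$, and then $v=\pi(\mathrm{vol}(K')^{-1}\mathbf 1_{K'})v$. So the content is the Lie group case. There, $V^\infty$ is a Fréchet space whose topology is given by seminorms $p(\pi(X_1)\cdots\pi(X_m)v)$ with $X_j\in\mathfrak g$.

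\emph{Step 1 (one-dimensional lemma).} For every sequence $(\varepsilon_n)_{n\ge0}$ of positive reals we want a sequence $a_n$ with $0<a_n\le\varepsilon_n$ and functions $g,h\in C_c^\infty(\bbR)$ such that
\[
\delta_0=\sum_{n\ge0}(-1)^n a_n\, g^{(2n)}+h
\]
as distributions.

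To construct these, take the entire even function $A(\xi)=\sum_n a_n\xi^{2n}$. If the $a_n$ are chosen small enough, the function $1/A$ restricted to $\bbR$ decays faster than any $e^{-\xi/\log^{2}\xi}$-type weight, yet it satisfies a non-quasianalyticity condition of Denjoy–Carleman type, $\int\log A(\xi)\,\frac{d\xi}{1+\xi^2}<\infty$. We then use a Beurling–Malliavin/Paley–Wiener argument: the condition yields a compactly supported smooth $\phi$ whose Fourier transform is controlled by $1/A$. From $\phi$ we cut off to produce $g$ with $A(D)g-\delta_0\in C_c^\infty$, and we set $h:=\delta_0-A(D)g$.

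I expect this step to be the main obstacle. The first thing I would try is the explicit choice $A(\xi)=\prod_{n}(1+\xi^2/m_n^2)$ with $m_n$ growing fast and $\sum 1/m_n<\infty$. In that case $1/A$ is the Fourier transform of an infinite convolution of the functions $\frac{m_n}{2}e^{-m_n|t|}$. Truncating this convolution and smoothing gives $g$ and $h$ directly. Expanding the product gives coefficients $a_n$ that can be made smaller than any prescribed $\varepsilon_n$ by taking the $m_n$ sparse.

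\emph{Step 2 (one parameter subgroup).} Fix $X\in\mathfrak g$ and $v\in V^\infty$. Choose $\varepsilon_n$ so small that $\sum_n\varepsilon_n\, p_j(\pi(X)^{2n}v)<\infty$ for every seminorm $p_j$ in a countable defining family. This is possible by a diagonal argument, taking $\varepsilon_n\le 2^{-n}/\max_{j\le n}(1+p_j(\pi(X)^{2n}v))$. Then $w:=\sum_n a_n\pi(X)^{2n}v$ converges in $V^\infty$. Integrating the identity of Step 1 along $t\mapsto\exp(tX)$ and integrating by parts gives
\[
v=\int g(t)\pi(\exp tX)w\,dt+\int h(t)\pi(\exp tX)v\,dt .
\]
Thus $v$ is a sum of two vectors of the form $\pi_X(\mu)u$, where $\mu$ is a smooth compactly supported measure on the subgroup $\exp(\bbR X)$.

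\emph{Step 3 (assembly).} Fix a basis $X_1,\dots,X_d$ of $\mathfrak g$. Apply Step 2 successively with $X_1$, then to each resulting vector with $X_2$, and so on. This writes $v$ as a finite sum of vectors of the form $\pi_{X_1}(\mu_1)\cdots\pi_{X_d}(\mu_d)u$. The product map $(t_1,\dots,t_d)\mapsto\exp(t_1X_1)\cdots\exp(t_dX_d)$ is a local diffeomorphism near $0$. After rescaling $g,h$ to have small support, which is allowed in Step 1 by dilation, the pushforward of $\mu_1\otimes\cdots\otimes\mu_d$ is therefore $f\,dg$ with $f\in C_c^\infty(G)$. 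Hence $\pi_{X_1}(\mu_1)\cdots\pi_{X_d}(\mu_d)=\pi(f)$, which gives the claimed finite decomposition $v=\sum_i\pi(f_i)v_i$.
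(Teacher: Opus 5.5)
Your outline is exactly the original Dixmier--Malliavin argument: a one-variable identity $\delta_0=\sum_n(-1)^na_ng^{(2n)}+h$ with $a_n$ below any prescribed sequence, integrated along $t\mapsto\pi(\exp tX)v$ with the $\varepsilon_n$ chosen from the seminorms of $\pi(X)^{2n}v$, then composed along a basis of $\mathfrak g$ in coordinates of the second kind after dilating supports. The paper gives no proof of its own beyond citing \cite{DM78}, so the approaches coincide, though there are two small repairs. First, the vector in Step~2 must be $w=\sum_n(-1)^na_n\pi(X)^{2n}v$, since the $2n$ integrations by parts contribute no sign. Second, in Step~1 smaller $a_n$ make $1/A$ decay \emph{more slowly}, not faster. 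Relatedly, the substantive point hidden in ``truncating and smoothing'' is that $\delta_0-A(D)(\eta\Phi)$ is a smooth compactly supported function, where $\Phi$ is your infinite convolution (so $\widehat\Phi=1/A$) and $\eta$ is a cutoff. This works if $\eta$ is fixed first and the $m_n$ are then taken so sparse that $a_n$ beats the derivative bounds of $\eta$ and of $\Phi$ on $\{|t|\ge\delta\}$; the latter must be uniform in the $m_n$, e.g.\ via the residue expansion of $\Phi$.
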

Now we prove Proposition \ref{prop52}.
\begin{proof}
We have the \(P_{k-2,2(n-3)}(F_v)\)-equivariance of
\(\calF_v(f_{s,v},\cdot)\), hence it remains
to justify convergence for \(\Re(s)\gg0\) and meromorphic continuation of the
local integral. Also, since
\[
\calF_v(f_{s,v},g)
=
\calF_v\bigl(\rho(\iota(g))f_{s,v},1\bigr),\]
and right translation preserves smooth holomorphic sections, it is enough to
consider the value at \(g=1\).

Let us first assume that \(v\) is non-Archimedean. We write
\(\overline U_{2,w}(F_v)\) as in \eqref{def501}. Thus the coordinates contain
the block
\[
  y=\binom{y_1}{y_2}\in \Mat_{2\times(k-2)}(F_v),
  \qquad y_1,y_2\in F_v^{k-2}.
\]
Denote by \(\overline U^{(0)}_{2,w}\) the subgroup obtained by setting these
\(y\)-coordinates equal to zero. 

We first ``remove'' the \(y_2\)-coordinates. Let
\[
  n(r)=\nPlusKone(r),\qquad r=(r_1,\ldots,r_{k-2})\in F_v^{k-2},
\]
be the corresponding product of root subgroups in \(U_k(F_v)\), as in the
unramified computation. Since \(f_{s,v}\) is smooth, there exists a compact open
subgroup \(K_f\subset \Sp_{2n}(F_v)\) such that
  $f_{s,v}(g\kappa)=f_{s,v}(g),\kappa\in K_f$. Choose \(N\geq 0\) sufficiently large so that
  $n(r)\in K_f \text{ for all }r\in \varpi^N\calO_v^{\,k-2}$, and also such that the \(y\)-root elements \(u(0,y,0)\) lie in \(K_f\) whenever
all coordinates of \(y\) lie in \(\varpi^N\calO_v\).

The calculation from the unramified case gives, for
\(r\in \varpi^N\calO_v^{\,k-2}\),
\[
  n(r)\,u(x,y,z)\,n(r)^{-1}=b(r,x,y,z)u(x',y,z),
\]
where \(b(r,x,y,z)\) lies in the unipotent radical of the inducing parabolic and hence
does not affect the section, and where the relevant change in the \(x\)-coordinates
is $x'_{1,1}=x_{1,1}+\sum_{i=1}^{k-2}y_{2,i}r_i$. 

The remaining changes in the coordinates are irrelevant for the character or are
absorbed by a measure-preserving change of variables. Since \(n(r)\in K_f\), right
\(K_f\)-invariance and left \(U_k(F_v)\)-invariance imply that the inner integral,
viewed as a function of \(y_2\), satisfies
\[
  G(y_2)
  =
  \psi_v\!\left(-\sum_{i=1}^{k-2}y_{2,i}r_i\right)G(y_2)
  \qquad
  \left(r\in\varpi^N\calO_v^{\,k-2}\right).
\]
Therefore \(G(y_2)=0\) unless  $\psi_v\!\left(\sum_{i=1}^{k-2}y_{2,i}r_i\right)=1
  \text{ for all } r\in\varpi^N\calO_v^{\,k-2}$. Equivalently,
  $y_2\in \left(\varpi^N\calO_v^{\,k-2}\right)^\vee$.
If the conductor of \(\psi_v\) is \(\mathfrak d_{\psi_v}\), this dual lattice is of
the form
  $\left(\varpi^N\calO_v\right)^\vee
  =
  \varpi^{-N}\mathfrak d_{\psi_v}^{-1}\calO_v
  =
  \varpi^{-M}\calO_v$ for some integer \(M\). Thus the \(y_2\)-integration is supported in the compact lattice \(\varpi^{-M}\calO_v^{\,k-2}\).

Now decompose this lattice into finitely many cosets modulo
\(\varpi^N\calO_v^{\,k-2}\):
\[
  \varpi^{-M}{\calO_v}^{\,k-2}
  =
  \bigsqcup_{\eta\in \Xi_2}
  \left(\eta+\varpi^N\calO_v^{\,k-2}\right).
\]
Here \(\Xi_2\) is a finite set for the quotient $\varpi^{-M}\mathcal O_v^{k-2}/\varpi^N\mathcal O_v^{k-2}$. For \(y_2=\eta+y_2''\), with \(y_2''\in\varpi^N\calO_v^{\,k-2}\), we have that
\[
  u\!\left(x,\binom{y_1}{\eta+y_2''},z\right)
  =
  u\!\left(x',\binom{y_1}{\eta},z'\right)\kappa(y_2''),
\]
where \(\kappa(y_2'')\in K_f\), the change of variables \((x,z)\mapsto(x',z')\)
has Jacobian \(1\), and the character \(\psi^{w^{-1}}_{\calO,v}\) is unchanged.
Hence, by right \(K_f\)-invariance, integration over each coset contributes a
constant multiple of the integral with \(y_2=\eta\). Consequently the
\(y_2\)-integration is absorbed into a finite linear combination of right translates
of \(f_{s,v}\).

Repeating the same argument for the \(y_1\)-coordinates, using the corresponding
root subgroups as in the unramified case, we obtain finitely many elements
\(\zeta_j\in \overline U_{2,w}(F_v)\) and constants \(c_j\) such that, for
\(\Re(s)\gg0\),
\[
  \calF_v(f_{s,v},1)
  =
  \sum_j c_j
  \int_{\overline U^{(0)}_{2,w}(F_v)}
    (\rho(\zeta_j)f_{s,v})(u)\,
    \psi^{w^{-1}}_{\calO,v}(u)\,du .
\]
This is the ramified case for deletion of
the \(y\)-coordinates in the spherical calculation.

It remains to justify the analytic continuation of the integrals above.
Under the embedding of the subgroup \(\Sp_{2(n-k+2)}\) on the relevant coordinates,
\(\overline U^{(0)}_{2,w}\) is an ordered product of root subgroups contained in the
opposite unipotent radical of the parabolic \(P_{2,n-k+2}\). With the character
\(\psi^{w^{-1}}_{\calO,v}\), the integral over
\(\overline U^{(0)}_{2,w}\) is one of the standard generalized Jacquet integrals
attached to this degenerate principal series. Each \(\rho(\zeta_j)f_{s,v}\) is again
a smooth holomorphic section of the same induced representation. Therefore the
integrals converge absolutely for \(\Re(s)\gg0\) and admit
meromorphic continuation in \(s\) by the standard analytic theory of local
Jacquet integrals \cite{J67}. Since the formula is a finite sum, the same conclusion holds
for \(\calF_v(f_{s,v},1)\). This proves the non-Archimedean case.

We now assume that \(v\) is Archimedean. Let $D:=F_v^{k-2}$, and let \(n(r)=\nPlusKone(r)\), \(r\in D\), be the corresponding unipotent subgroup
contained in \(U_k(F_v)\). By the Dixmier--Malliavin theorem, applied to the
smooth right action of the Lie group \(D\), it is enough to treat sections of the
form $f_{s,v}=\rho(\phi)\varphi_s$, where \(\phi\in \mathcal S(D)\), \(\varphi_s\) is a smooth holomorphic section, and
\[
  (\rho(\phi)\varphi_s)(g)
  =
  \int_D \phi(r)\,\varphi_s(g\,n(r))\,dr .
\]

Write
  $Y:=\Mat_{2\times(k-2)}(F_v);\;
  Y_2:=F_v^{k-2};\;
  Y^{(2)}:=\left\{\binom{0}{y_2}:y_2\in Y_2\right\}\subset Y$.
Choose a closed subgroup
\(\overline U'_{2,w}(F_v)\subset \overline U_{2,w}(F_v)\) such that multiplication
induces a diffeomorphism
\[
  \overline U'_{2,w}(F_v)\times u'_2(Y_2)
  \longrightarrow
  \overline U_{2,w}(F_v),\qquad
  (u',u'_2(y_2))\longmapsto u'u'_2(y_2),
\]
where
  $u'_2(y_2):=\overline{u\!\left(0,\binom{0}{y_2},0\right)}$.
  
We choose Haar measures so that \(du=du'\,dy_2\). Since
\(\psi^{w^{-1}}_{\calO,v}\) is trivial on \(u'_2(Y_2)\), we have
  $\psi^{w^{-1}}_{\calO,v}\bigl(u'u'_2(y_2)\bigr)
  =
  \psi^{w^{-1}}_{\calO,v}(u')$. For \(\Re(s)\gg0\), the integral is absolutely convergent, and the rapid decay of
\(\phi\) justifies the following applications of Fubini. We have
\begin{align*}
  \calF_v(\rho(\phi)\varphi_s,1)
  &=
  \int_{\overline U_{2,w}(F_v)}
    (\rho(\phi)\varphi_s)(u)\,
    \psi^{w^{-1}}_{\calO,v}(u)\,du                                     \\
  &=
  \int_{\overline U'_{2,w}(F_v)}
  \int_{Y_2}
  \int_D
    \phi(r)\,
    \varphi_s\!\bigl(u'u'_2(y_2)n(r)\bigr)\,
    dr\,
    \psi^{w^{-1}}_{\calO,v}(u')\,dy_2\,du',
\end{align*}
where we used the fact that $u=u'u'_2(y_2)$ and $\psi^{w^{-1}}_{\calO,v}(u'u'_2(y_2))=\psi^{w^{-1}}_{\calO,v}(u')$. Using
\[
  u'u'_2(y_2)n(r)
  =
  n(r)\Bigl(n(r)^{-1}u'u'_2(y_2)n(r)\Bigr)
\]
and the left \(U_k(F_v)\)-invariance of \(\varphi_s\), we get
\[
  \varphi_s\!\bigl(u'u'_2(y_2)n(r)\bigr)
  =
  \varphi_s\!\bigl(n(r)^{-1}u'u'_2(y_2)n(r)\bigr).
\]
The same calculation as in the non-Archimedean case shows that
conjugation by \(n(r)\) preserves the \(y_2\)-coordinate and changes the relevant
\(x\)-coordinate by $\langle y_2,r\rangle=\sum_{i=1}^{k-2}y_{2,i}r_i$. Thus
  $\psi^{w^{-1}}_{\calO,v}
  \bigl(n(r)^{-1}u'u'_2(y_2)n(r)\bigr)
  =
  \psi^{w^{-1}}_{\calO,v}(u')\,
  \psi_v(\langle y_2,r\rangle)$. Consequently, by a change of variables we obtain
\begin{align*}
  \calF_v(\rho(\phi)\varphi_s,1)
  &=
  \int_{\overline U'_{2,w}(F_v)}
  \int_{Y_2}
  \int_D
    \phi(r)\,
    \psi_v(\langle y_2,r\rangle)\,
    \varphi_s\!\bigl(u'u'_2(y_2)\bigr)\,
    dr\,
    \psi^{w^{-1}}_{\calO,v}(u')\,dy_2\,du' .
\end{align*}

Define
\[
  \widehat\phi(y_2)
  :=
  \int_D \phi(r)\,\psi_v(\langle y_2,r\rangle)\,dr .
\]
Then \(\widehat\phi\in\mathcal S(Y_2)\). Hence
\[
  \calF_v(\rho(\phi)\varphi_s,1)
  =
  \int_{\overline U'_{2,w}(F_v)}
    \psi^{w^{-1}}_{\calO,v}(u')
    \left(
      \int_{Y_2}
        \widehat\phi(y_2)\,
        \varphi_s\!\bigl(u'u'_2(y_2)\bigr)\,dy_2
    \right)
  du' .
\]
Set
\[
  \varphi_{1,s}(u')
  :=
  \int_{Y_2}
    \widehat\phi(y_2)\,
    \varphi_s\!\bigl(u'u'_2(y_2)\bigr)\,dy_2 .
\]
The rapid decay of \(\widehat\phi\) implies that \(\varphi_{1,s}\) is again obtained
from \(\varphi_s\) by Schwartz convolution. In particular it is smooth, depends
holomorphically on \(s\), and has uniform moderate growth on vertical strips. Thus
the \(y_2\)-integration has been absorbed into the section.

Repeating the same argument for the remaining \(y_1\)-coordinates, using the
corresponding root subgroups as in the unramified case, reduces the original local Fourier
coefficient to an integral of the form
\[
  \int_{\overline U^{(0)}_{2,w}(F_v)}
    \varphi'_{s}(u)\,
    \psi^{w^{-1}}_{\calO,v}(u)\,du ,
\]
where \(\varphi'_s\) is obtained from the original section by finitely many Schwartz
convolutions and therefore remains a smooth holomorphic family of moderate
growth. As above, this is a standard generalized Jacquet integral. By \cite{J67}, it converges
absolutely for \(\Re(s)\gg0\) and admits meromorphic continuation in \(s\).
Hence \(\calF_v(f_{s,v},\cdot)\) defines a meromorphic section of
\(I_{n-3,v}(s)\), as required.
\end{proof}

\begin{proposition}\label{prop4}
Fix a local place \(v\) and \(s_0\in\mathbb C\) with
\(\Re(s_0)\geq0\). Assume that, in fixed compact pictures, the continued
local Fourier coefficients form a meromorphic family of continuous
operators
\[
\calF_{v,s}:I_{n,v}(s)\longrightarrow I_{n-3,v}(s)
\]
which is holomorphic at \(s=s_0\). Then \(\calF_{v,s_0}\) is surjective if \(v\) is non-Archimedean, and has dense image in the smooth
Fr\'echet topology if \(v\) is Archimedean.
\end{proposition}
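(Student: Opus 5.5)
The approach is to exploit equivariance. By Lemma~\ref{lem:global-section} and Proposition~\ref{prop52}, $\calF_{v,s_0}$ is a nonzero (to be checked) continuous intertwining map for the action of $\Sp_{2(n-3)}(F_v)$ through $\iota_c$. I would reduce surjectivity to the claim that the image is a nonzero $\Sp_{2(n-3)}(F_v)$-invariant subspace. This subspace is large enough to contain a generating vector of $I_{n-3,v}(s_0)$.

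The steps, in order, are as follows.

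\textbf{Step 1: equivariance.} Check that $\calF_{v,s}(\rho(\iota_c(h))f)=\rho(h)\calF_{v,s}(f)$ for $h\in\Sp_{2(n-3)}(F_v)$. This is immediate from the definition \eqref{int405}, since $\iota_c(h)$ commutes past the $u$-integration in the variable $g$. The identity persists at $s_0$ by meromorphic continuation and holomorphy at $s_0$. Consequently the image $\mathrm{Im}$ is an invariant subspace.

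\textbf{Step 2: the image contains functions of small support.} Use the open Bruhat-type cell of $\Sp_{2(n-3)}$ relative to $P'=P_{k-2,2(n-3)}$. Take $f$ supported in $P_k(F_v)\,w\,\overline U_{2,w}(F_v)\,\iota_c(\Omega)$, with $\Omega$ a small neighborhood of $1$ in the opposite unipotent radical $\overline U_{P'}$. Choose $f$ so that its restriction there is $\phi_1(u)\phi_2(\omega)$ with $\phi_1,\phi_2$ compactly supported smooth. This requires a transversality check: the map $(p,u,\omega)\mapsto p\,w\,u\,\iota_c(\omega)$ is injective with open image. I would verify it from the computation of $w\iota_c(P')w^{-1}\subset P_k$ in Lemma~\ref{lem:global-section}, together with $w\,\overline U_{P'}w^{-1}\cap P_k=1$. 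For such $f$ the integral converges absolutely for every $s$ and is entire. Moreover
\[
\calF_{v,s}(f)(\bar n)=\Bigl(\int\phi_1(u)\psi^{w^{-1}}_{\calO,v}(u)\,du\Bigr)\phi_2(\bar n),\qquad \bar n\in\overline U_{P'}(F_v).
\]
Choosing $\phi_1$ with nonzero $\psi$-integral shows that $\mathrm{Im}$ contains every section supported in $P'\Omega$ with arbitrary smooth restriction to $\Omega$.

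\textbf{Step 3: generation.} Translates under $\Sp_{2(n-3)}(F_v)$ of sections supported in the big cell span $I_{n-3,v}(s_0)$, by a partition of unity on the compact flag variety $P'\backslash\Sp_{2(n-3)}$, using finitely many translates of the cell. This gives surjectivity in the $p$-adic case. In the Archimedean case, the smooth partition of unity shows that the span of such translates is dense in the Fréchet topology of the compact picture. Since the image is a subspace containing all of them, it is dense.

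The main obstacle is Step 2. The difficulty is justifying that, for sections supported in the small cell, the continued operator at $s_0$ agrees with the absolutely convergent integral. Meromorphic continuation is only known in the form given by Proposition~\ref{prop52}, and $\calF_{v,s}$ could a priori differ from the convergent expression there. I would handle this by holomorphy and uniqueness of continuation. The compactly supported family $f_s$ can be chosen holomorphic in $s$, since its support avoids the issue. For $\Re(s)\gg0$ the two expressions coincide, and both are holomorphic at $s_0$, so they agree there. The hypothesis $\Re(s_0)\ge0$ appears only in ensuring that the paper's fixed compact-picture normalization is valid. The double-coset transversality is the other point needing care.
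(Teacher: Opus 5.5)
\textbf{Gap in Step~2.} Your strategy of equivariance, then test sections whose images are arbitrary bump functions on the open cell of $P'\backslash\Sp_{2(n-3)}$, then generation by translates, is a legitimate alternative to the paper's route. However, Step~2 rests on a false transversality claim: the map $(p,u,\omega)\mapsto p\,w\,u\,\iota_c(\omega)$ does not have open image. Modulo $P_k$, its image has dimension $\dim U_{2,w}+\dim(P'\backslash\Sp_{2(n-3)})$, whereas
\[
\dim(P_k\backslash\Sp_{2n})=2k(n-k)+\tfrac{k(k+1)}{2}=\dim U_2+\dim(P'\backslash\Sp_{2(n-3)}).
\]
So the image has codimension $\dim U_2^w=2(k-2)>0$. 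This comes from the roots $e_i+e_j$ ($i=1,2$, $4\le j\le k+1$), which $w$ sends into the Levi of $P_k$. Concretely, $wU_{2,w}w^{-1}$ and $w\iota_c(\overline U_{P'})w^{-1}$ are products of root subgroups of $\overline U_k$ that together omit exactly the roots $-e_c\pm e_{k+1}$, $1\le c\le k-2$. The facts you planned to use, $w\iota_c(P')w^{-1}\subset P_k$ and $w\overline U_{P'}w^{-1}\cap P_k=1$, give at most injectivity. Hence no nonzero smooth section is supported in that set. Your claim that the integral converges for every $s$ because of the support of $f$ is therefore unfounded as stated.

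\textbf{Repair.} The fix is the one the paper uses. The set $\mathcal Z_W$ of cosets $P_kwu\iota_c(\omega)$ is a locally closed submanifold of positive codimension. You prescribe $f(wu\iota_c(\omega))=\phi_1(u)\phi_2(\omega)$ only on it, with $\phi_2\in C_c^\infty(\Omega)$. You then extend to a smooth section on $P_k\backslash\Sp_{2n}$, using compact-open product charts, resp.\ a tubular neighbourhood and a cutoff.

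For $\bar n$ in the open cell, the integrand $u\mapsto f_s(wu\iota_c(\bar n))$ only involves the restriction to $\mathcal Z_W$, so it is compactly supported in $u$. The raw integral is then entire in $s$. Your identity-theorem argument gives $\calF_{v,s_0}(f)(\bar n)=c\,\phi_2(\bar n)$ with $c=\int\phi_1\psi_{\calO,v}\,du\neq0$.

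Off the open cell you no longer control the integrand, but you do not need to. $\calF_{v,s_0}(f)$ and the section equal to $c\,\phi_2$ on the cell and $0$ elsewhere are both continuous and agree on a dense open set, so they coincide.

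\textbf{Comparison with the paper.} With this repair your Step~3 goes through, and your route is more direct than the paper's. The paper feeds the same test sections into a duality argument: an annihilator in the contragredient must vanish on the open cell, hence is zero. It then needs finite length at finite places and Hahn--Banach at Archimedean places. Your partition of unity shows directly that the image contains every smooth section. At Archimedean places this even yields surjectivity on the smooth induced space rather than mere density.
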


\begin{proof}
Put
$G=\Sp_{2n}(F_v);\; P=P_k(F_v);\;
G'=\Sp_{2(n-3)}(F_v);\;
P'=P_{k-2,2(n-3)}(F_v)$, and set $a=k-2; q=n-k-1$. We first record the open-cell coordinates used in the proof. Let
\(U_{2,w}^{\mathrm{c}}\subset U_2\) be the explicit subgroup complement
representing the quotient \(U_2^w\backslash U_2\), so that
$wU_{2,w}^{\mathrm{c}}w^{-1}=\overline U_{2,w}$ with \(\overline U_{2,w}\) as in \eqref{def501}. Let
\[
\Omega_{2q}=
\begin{pmatrix}
0&J_q\\
-J_q&0
\end{pmatrix}.
\]
For \(Y\in\Mat_{2q\times a}(F_v)\) and
\(Z\in\Mat_a(F_v)\) with \(J_aZ=Z^tJ_a\), put
$Y^*=-J_aY^t\Omega_{2q},$ and $C(Y,Z)
=
Z-\frac12J_aY^t\Omega_{2q}Y
=
Z+\frac12Y^*Y$, and
\[
\bar u(Y,Z)=
\begin{pmatrix}
I_a&0&0\\
Y&I_{2q}&0\\
C(Y,Z)&Y^*&I_a
\end{pmatrix}\in\bar U'.
\]

Let \(w_{P'}\) be the relative open-cell Weyl representative for
\(P'\backslash G'\). Since
$w_{P'}^{-1}\bar U'w_{P'}=U'$, we use the open-cell parametrization
$\kappa_+(Y,Z)
=
\bar u(Y,Z)w_{P'}
=
w_{P'}u^+(Y,Z)$, where
$u^+(Y,Z)
=
w_{P'}^{-1}\bar u(Y,Z)w_{P'}\in U'$.
Thus $(Y,Z)\longmapsto P'\kappa_+(Y,Z)$ parametrizes the full open Bruhat cell of \(P'\backslash G'\). Define
$\Phi_+(u,Y,Z)
=
P\,wu\,\iota_c\!\left(\bar u(Y,Z)w_{P'}\right),$ where $
u\in U_{2,w}^{\mathrm{c}}(F_v)$. Put $W=w\iota_c(w_{P'})$. 
A direct calculation of the relative inversion roots shows that the root
coordinates contributed by \(U_{2,w}^{\mathrm{c}}\) and
\(\iota_c(U')\) form a root-closed subgroup of the \(W\)-Bruhat cell.
The remaining transverse roots are $\{e_3+e_c:4\leq c\leq k+1\}$.

Consequently \(\Phi_+\) is an algebraic isomorphism onto the subvariety $\mathcal Z_W
\subset P\backslash G$ obtained by setting these transverse root coordinates equal to zero.
Hence \(\mathcal Z_W\) is closed in the \(W\)-Bruhat cell and therefore
locally closed in \(P\backslash G\).

It follows from the local product structure that every compactly
supported smooth section on $U_{2,w}^{\mathrm{c}}(F_v)\times\bar U'(F_v)$, viewed as a section on \(\mathcal Z_W\) through \(\Phi_+\), extends to a
compactly supported smooth section on \(P\backslash G\). At a
non-Archimedean place this follows from compact-open product charts; at
an Archimedean place it follows from a tubular neighborhood and a
compactly supported cutoff.

We now turn to the image of the local Fourier map. Write
\[
I'_v(s)=I_{n-3,v}(s)
=
\Ind_{P'}^{G'}
\left(
\chi_v|\det|^s
\otimes1_{\Sp_{2(n-k-1)}(F_v)}
\right),
\]
and realize its smooth contragredient as
\[
J'_v(s)
=
\Ind_{P'}^{G'}
\left(
\chi_v^{-1}|\det|^{-s}
\otimes1_{\Sp_{2(n-k-1)}(F_v)}
\right).
\]
All inductions are normalized.

Let \(K'\subset G'\) be a maximal compact subgroup. In the fixed compact
pictures, the standard pairing
$[F,\xi]_s
=
\int_{K'}\langle F(k),\xi(k)\rangle\,dk$ is nondegenerate, \(G'\)-invariant, and independent of \(s\). It is equivalent to work with the original local model
\[
\mathcal A_{v,s}(f)(g)
=
\int_{U_{2,w}^{\mathrm{c}}(F_v)}
f\bigl(wu\iota_c(g)\bigr)
\psi_{\calO,v}(u)\,du,
\]
since it differs from the conjugated model by an invertible right
translation of the source.

Let \(\xi_0\in J'_v(s_0)\) annihilate the image of
\(\mathcal A_{v,s_0}\). We prove that \(\xi_0=0\). On the open cell, the
compact-picture pairing takes the form
\[
[F,\xi]_s
=
\int_{\bar U'(F_v)}
\left\langle
F(\kappa_+(Y,Z)),
\xi(\kappa_+(Y,Z))
\right\rangle
\,d\mu(Y,Z),
\]
where \(d\mu\) is a fixed nowhere-vanishing smooth density.

Take arbitrary $\varphi\in C_c^\infty(\bar U'(F_v))$ and choose $h\in C_c^\infty(U_{2,w}^{\mathrm{c}}(F_v))$ such that
\[
\int_{U_{2,w}^{\mathrm{c}}(F_v)}
h(u)\psi_{\calO,v}(u)\,du=1.
\]
There is a compactly supported
source section \(f_0\in I_{n,v}(s_0)\) satisfying $f_0(\Phi_+(u,Y,Z))=h(u)\varphi(Y,Z)$, with joint compact support in \((u,Y,Z)\).

Extend \(f_0\) and \(\xi_0\) to flat entire families \(f_s\) and \(\xi_s\)
in the corresponding compact pictures. Define $B_{\mathrm{cont}}(s)
=
[\mathcal A_{v,s}(f_s),\xi_s]_s$ and
\[
B_{\mathrm{raw}}(s)
=
\int_{\bar U'(F_v)}
\int_{U_{2,w}^{\mathrm{c}}(F_v)}
\psi_{\calO,v}(u)
\left\langle
f_s(\Phi_+(u,Y,Z)),
\xi_s(\kappa_+(Y,Z))
\right\rangle
\,du\,d\mu(Y,Z).
\]
The joint compact support makes \(B_{\mathrm{raw}}(s)\) entire.
The operator-valued meromorphy of \(\mathcal A_{v,s}\) implies that
\(B_{\mathrm{cont}}(s)\) is meromorphic and regular at \(s=s_0\).

The meromorphic identity theorem therefore gives the same equality at
\(s=s_0\). Since \(\xi_0\) annihilates the image, $B_{\mathrm{cont}}(s_0)=0$. Using the prescribed restriction of \(f_0\) and the normalization of
\(h\), we obtain
\[
\int_{\bar U'(F_v)}
\left\langle
\varphi(Y,Z),
\xi_0(\kappa_+(Y,Z))
\right\rangle
\,d\mu(Y,Z)=0.
\]
Since \(\varphi\) is arbitrary, \(\xi_0\) vanishes on the full open
Bruhat cell. That cell is dense and \(\xi_0\) is smooth, hence
$\xi_0=0$. At a non-Archimedean place, the image is a \(G'\)-subrepresentation of
the admissible finite-length representation \(I'_v(s_0)\). A proper
image would give a nonzero smooth contragredient annihilator through an
irreducible quotient, contradicting the preceding paragraph. Hence the
map is surjective.

At an Archimedean place, if the image were not dense, Hahn--Banach would
give a nonzero continuous annihilator. Convolution with
\(C_c^\infty(G')\), followed by a smooth approximate identity as before, would
produce a nonzero smooth contragredient annihilator, again a
contradiction. Therefore the image is dense.
\end{proof}

\section{Proof of the main descent theorem and applications}\label{sec:applications}

\subsection{Proof of the main descent theorem}

We first complete the proof of \hyperref[thm401]{Theorem 3.5}.

\begin{proof}
In the initial absolute-convergence half-plane, the unfolding proved
above identifies $F_{\psi_{\calO}}(E_{f_s})$ with the Eisenstein series
attached to the descended section $g\longmapsto\calF(f_s,g)$.

Suppose that $f_s=\otimes_v'f_{s,v}$ is a factorizable standard section.
Choose a finite set $S$ containing the Archimedean places and all places
at which the data are ramified. For $v\notin S$, let
$f_{s,v}^{\circ}$ and $f_{s,v}^{\prime\circ}$ denote the normalized
source and target spherical sections. By Theorem~\ref{thm501},
\[
 \calF_v(f_{s,v}^{\circ},g_v)
 =c_v(s)f_{s,v}^{\prime\circ}(g_v),
\]
where
\[
\begin{aligned}
c_v(s)={}&L_v(\chi_v^2,2s+k-1)^{-1}\\
&{}\times
L_v\left(\chi_v,s+n-\frac{k+1}{2}\right)^{-1}
L_v\left(\chi_v,s+n-\frac{k-1}{2}\right)^{-1}.
\end{aligned}
\]
Consequently
\[
 \calF(f_s)=C^S(s)
 \left(\bigotimes_{v\notin S}'f_{s,v}^{\prime\circ}\right)
 \otimes\left(\bigotimes_{v\in S}\calF_v(f_{s,v})\right),
\]
where
\[
C^S(s)=
\frac{1}{
L^S(\chi^2,2s+k-1)
L^S\left(\chi,s+n-\frac{k+1}{2}\right)
L^S\left(\chi,s+n-\frac{k-1}{2}\right)}.
\]
This identity first holds in the common absolute-convergence domain.
Proposition~\ref{prop52} gives the required local meromorphic
continuations at the finitely many places in $S$, while the preceding
unramified formula gives the remaining factors. Hence the identity
extends as an identity of meromorphic families. Therefore
$F_{\psi_{\calO}}(E_{f_s})$ is the Eisenstein series attached to the
descended section $\calF(f_s,\cdot)\in I_{n-3}(s)$, initially in the
convergence region and then by meromorphic continuation.

At a parameter $s_0$ where the relevant operator-valued local
continuations are regular, Proposition~\ref{prop4} gives exact local
preimages at finite places and dense local image at Archimedean places.
The general right-$K$-finite case follows locally from finite linear
combinations of factorizable flat sections with holomorphic coefficients.
\end{proof}

\subsection{Further Application}

For $a\geq2$, put
\begin{equation}\label{eq:section5-Lambda}
 \Lambda_a=
 \begin{cases}
 \{j-\tfrac12:1\leq j\leq a/2\},&a\text{ even},\\[2pt]
 \{j:1\leq j\leq(a-1)/2\},&a\text{ odd}.
 \end{cases}
\end{equation}
\begin{remark}[Application to maximal Fourier coefficients]\label{rem:maximal-partition}
Assume that $\chi$ is a nontrivial quadratic Hecke character, and put
\[
 r=n-k,\qquad a=3k-2n.
\]
If $r\geq1$ and $a\geq2$, then the descent may be iterated $r$ times:
\[
 (n,k)\longmapsto(n-3,k-2)\longmapsto\cdots\longmapsto(a,a).
\]
Thus the last member of the tower is the Siegel Eisenstein series on
$\Sp_{2a}$ with the same parameter $s$.  At the positive pole set
$\Lambda_a$, the results of Ginzburg--Soudry (Theorem~2.2(III) \cite{GS221} show that the
corresponding terminal residual representation has maximal partition
$[2^a]$.

Combining the iterated descent with the identity of
Jiang--Liu's Proposition~3.3 \cite{JL15} suggests adjoining a pair
$3^2$ at each step and hence leads to the partition
\[
 [3^{2r}2^a]=[3^{2(n-k)}2^{3k-2n}].
\]
Moreover, this is precisely the Richardson partition of $P_k$, by the
type-$C$ induction rule
\cite[Corollary~7.3.4(ii)]{CM93}. The local wave-front bounds of
\cite{CK25}, together with \cite{Lapid08} and \cite{GGS17}, therefore provide a route to showing that this partition is maximal for suitable Laurent coefficients of the original Eisenstein series.
\end{remark}

\end{document}